\newcommand{\px}{\ensuremath{{p(\cdot)}}}
\newcommand{\pdx}{\ensuremath{{p'(\cdot)}}}
\newcommand{\rhoKR}{\rho^{\scriptscriptstyle\text{KR}}}
\newcommand{\rhoER}{\rho^{\scriptscriptstyle\text{ER}}}
\begin{document}
	
\title[Maximal operator on variable exponent spaces]{Maximal operator on variable exponent spaces}%

\author{Daviti Adamadze}
\address{Daviti Adamadze \\
  Fakult\"{a}t  f\"{u}r Mathematik,
  University of Bielefeld\\
  Universit\"{a}tsstrasse 25, 33615 Bielefeld, Germany}
\email{daviti.adamadze@uni-bielefeld.de}

\author{Lars Diening}
\address{Lars Diening \\
  Fakult\"{a}t  f\"{u}r Mathematik,
  University of Bielefeld\\
  Universit\"{a}tsstrasse 25, 33615 Bielefeld, Germany}
\email{lars.diening@uni-bielefeld.de}

\author{Tengiz Kopaliani}
\address{Tengiz Kopaliani \\
  Faculty of Exact and Natural Sciences\\
  Javakhishvili Tbilisi State
  University\\
  13, University St., Tbilisi, 0143, Georgia}
\email{tengiz.kopaliani@tsu.ge}
\thanks{}%
\thanks{The first author gratefully acknowledges financial support by the Deutsche Forschungsgemeinschaft (DFG, German Research Foundation) - IRTG 2235 - Project number 282638148. The third author was supported by Shota Rustaveli National Science Foundation of Georgia FR-21-12353.}%
\subjclass{42B25, 46E30}%
\keywords{maximal operator, variable exponent spaces, Muckenhoupt condition, Nekvinda condition.}%

\begin{abstract}
  We explore the boundedness of the Hardy-Littlewood maximal operator~$M$ on variable exponent spaces. Our findings demonstrate that the Muckenhoupt condition, in conjunction with Nekvinda's decay condition, implies the boundedness of~$M$ even for unbounded exponents. This extends the results of Lerner, Cruz-Uribe and Fiorenza for bounded exponents. We also introduce a novel argument that allows approximate unbounded exponents by bounded ones while preserving the Muckenhoupt and Nekvinda conditions.
\end{abstract}

\maketitle

{\centerline{\small \emph{Dedicated to the loving memory of our friend Ale{\v s} Nekvinda}.}\vspace{0mm}}



\section{Introduction}
\label{sec:introduction}

Variable exponent spaces are an extension of the classical Lebesgue spaces, where the exponent is not a constant but depends on the spatial variable. These function spaces appear naturally in the study of electrorheological fluids, where the viscosity changes significantly by the presence of an electric field, see \cite{RajRuz1996, Ruzicka2000book,AcerbiMingione2002}. Given a variable exponent $p\,:\, \RRn \to [1,\infty]$ a measurable function $f\,:\, \RRn \to \RR$ is in the variable exponent Lebesgue space~$L^{p(\cdot)}(\RRn)$ if
\begin{align}
  \label{eq:modular-intro}
  \rho_{\px}(f) &\coloneqq \int_{\RRn} \abs{f(x)}^{p(x)}\,dx < \infty
\end{align}
with the convention that $t^\infty \coloneqq 0$ for $t \in [0,1]$ and  $t^\infty \coloneqq \infty$ for $t >1$.
The norm on~$L^{p(\cdot)}(\RRn)$ is then defined by 
\begin{align}
  \label{eq:norm-intro}
  \norm{f}_{p(\cdot)}\coloneqq\inf \left\lbrace \lambda > 0: 
    \rho_{p(\cdot)} (f/ \lambda) \leq 1 \right\rbrace.
\end{align}
If $p$ is constant, then we just obtain the classical Lebesgue spaces.

Since the introduction of the model on electrorheological fluids, there has been a tremendous increase in research on variable exponent spaces. Many results regarding variable exponent Lebesgue spaces and variable exponent Sobolev spaces are by now classical and can be found in the books \cite{DieHarHasRuz11,CruFio13}.

The Hardy-Littlewood maximal operator~$M$ plays a fundamental role in the investigation of those spaces and their related PDEs. It is defined by
\[
	Mf(x) =\sup_{x\in Q}\frac{1}{|Q|}\int_{Q}|f(y)|dy,
\]
where the supremum is taken over all cubes $Q\subset\mathbb{R}^{n}$ containing the point $x$ and $|Q|$ denotes the Lebesgue measure of~$Q$. Similar versions of~$M$ with balls or centered balls can also be used. 

Whenever~$M$ is bounded on~$L^{p(\cdot)}(\RRn)$, then many results like sharp embeddings and boundedness of singular integral operators follow by standard tools from harmonic analysis. Hence, it is very important to have an effective criterion on the variable exponent in order to guarantee the boundedness of~$M$. 

In a series of papers starting with Diening~\cite{Die04} and Cruz-Uribe, Fiorenza and Neugebauer~\cite{CruFioNeu03} it was shown that if~$p$ satisfies a local $\log$-H\"older condition together with a $\log$-decay condition and if~$p^-\coloneqq \essinf p>1$, then $M$ is bounded on~$L^{p(\cdot)}(\RRn)$, see \cite[Theorem~4.3.8]{DieHarHasRuz11}. The conditions on~$p$ are
\begin{align*}
  \bigabs{\tfrac 1{p(x)} - \tfrac 1{p(y)}} &\leq \tfrac{c}{\log(e+1/\abs{x-y})}
  &
  \bigabs{\tfrac 1{p(x)} - \tfrac 1{p_\infty}} &\leq \tfrac{c}{\log(e+1/\abs{x})}
\end{align*}
for some~$p_\infty \in [1,\infty]$ and all $x,y\in \RRn$.

The decay condition was relaxed by Nekvinda~\cite{Nek04} to an integral condition, i.e. $p \in \mathcal{N}$ if there exists 
$p_{\infty} \in [1,\infty]$ such that
\begin{align}
  \label{eq:def-N}
  1 \in L^{s(\cdot)}(\mathbb{R}^{n}) \qquad \text{with} \qquad
  \tfrac{1}{s(x)} \coloneqq \bigabs{\tfrac{1}{p(x)} - \tfrac{1}{p_{\infty}}}.
\end{align}
We define $[p]_{\mathcal{N} } \coloneqq \norm{1}_{s(\cdot)}$. Some of the first proofs were limited to the case~$p^+ \coloneqq \esssup p<\infty$. The proof for $p^+=\infty$ goes back to~\cite[Theorem 6.3]{DieHarHasMizShi09}, where it is also shown that~$p^->1$ is necessary.

Although the criterion using local $\log$-H\"older continuity is easy to check it is unfortunately not necessary. It is the optimal criterion in terms of a modulus of continuity, see \cite{PickRuzicka01} and \cite[after Remark 4.7.6]{DieHarHasRuz11}. However, there are even non-continuous variable exponents~$p$ with no limit at~$\infty$ such that $M$ is still bounded~\cite{Lerner05} and \cite[Example~5.1.8]{DieHarHasRuz11}.

Since the $\log$-H\"older conditions were not optimal, it was natural to investigate integral conditions that are similar to the conditions on Muckenhoupt weights, \cite{Die05}. It is well known for $1<p< \infty$ that $M$ is bounded on the weighted Lebesgue space $L^p(w\,dx)$ with norm $(\int_{\RRn} \abs{f(x)}^{p} w(x)\,dx)^{\frac 1p}$ if and only if~ $w\,:\,\RRn \to (0,\infty)$ is a~$A_p$-Muckenhoupt weight. The standard definition of a Muckenhoupt weight is not very useful in our context, since our~$p$ is variable. However, the $A_p$-Muckenhoupt condition is equivalent to the uniform boundedness of the family operators~$T_Q f \coloneqq \indicator_Q \dashint_Q \abs{f(x)}\,dx$ on $L^p(w\,dx)$, where~$Q$ is a cube and $\indicator_Q$ its indicator function.

Another simple characterization of Muckenhoupt weight uses the conjugate spaces. The conjugate space of $L^p(w \,dx)$ is $L^{p'}(w^{\frac{1}{1-p}} \,dx)$, where $p'$ is the conjugate exponent of~$p$, i.e. $1 = \frac 1p + \frac 1{p'}$.  Then $w$ is an $A_p$-Muckenhoupt weight if and only if
\begin{align*}
  \sup_Q \tfrac{1}{|Q|} \|\indicator_Q\|_{L^p(w,\,dx)} \|\indicator_Q\|_{L^{p'}(w^{\frac{1}{1-p}} \,dx)} < \infty,
\end{align*}
where $\indicator_Q$ is the indicator function of~$Q$. This condition can be generalized to the language of variable exponents: We say that~$p \in \mathcal{A}$ if and only if
\begin{align} \label{eq:Ap-intro}
  [p]_{\mathcal{A}} = \sup_Q \tfrac{1}{|Q|} \|\indicator_Q\|_{p(\cdot)} \|\indicator_Q\|_{p'(\cdot)} < \infty,
\end{align}
This condition is equivalent to the uniform boundedness of the family~$T_Q$ on~$L^{p(\cdot)}(\RRn)$ as shown in \cite[Theorem 4.5.7]{DieHarHasRuz11}. Moreover, the condition is obviously stable under duality, i.e $[p']_{\mathcal{A}} = [p]_{\mathcal{A}}$. 

However, unlike the case of weighted Lebesgue spaces, the Muckenhoupt condition~$p \in \mathcal{A}$ is not sufficient for the boundedness of~$M$, see \cite{Kop08} and \cite[Theorem~5.3.4]{DieHarHasRuz11}. This problem was overcome in~\cite{Die05} by using averaging operator over disjoint families of cubes. In particular, it was shown in~\cite{Die05} that the boundedness of~$M$ (for $1<p^-\leq p^+<\infty$) is equivalent to the uniform boundedness of the family of operators~$T_{\mathcal{Q}}$ with $T_\mathcal{Q} f\coloneqq \sum_{Q \in \mathcal{Q}} T_Q f$, where~$\mathcal{Q}$ runs over the families of pairwise disjoint cubes. An important consequence of the characterization in~\cite{Die05} is that $M$ is bounded on $L^{p(\cdot)}(\RRn)$ if and only if it is bounded on~$L^{p'(\cdot)}(\RRn)$ provided that $1<p^-\leq p^+<\infty$. Unfortunately, the condition in~\cite{Die05} using the family of~$T_{\mathcal{Q}}$ is difficult to check. Another equivalent condition was introduced recently in~\cite{Lerner2023arXiv} for $1<p^-\leq p^+ < \infty$, but it is also difficult to verify.

Kopaliani~\cite{Kop07} demonstrated that if~$p$ is constant outside a large ball and $1<p^-\leq p^+<\infty$, then~$p\in\mathcal{A}$ is already sufficient for the boundedness of~$M$ on $L^{p(\cdot)}(\RRn)$. An alternative proof of this fact has been provided by Lerner~\cite[Remark~3.4]{Ler10}. In this article Lerner also provides a local version estimating $\norm{\indicator_E Mf}_{\px}$ for sets~$E$ of bounded measure, which is solely based on $p \in \mathcal{A}$ and $1<p^-\leq p^+ < \infty$. This technique was refined by Cruz-Uribe and Fiorenza in~\cite[Theorem~4.52]{CruFio13} to show boundedness of~$M$ on $\RRn$ with the additional assumption that $p \in \mathcal{N}$ still using $1<p^-\leq p^+<\infty$. The purpose of this paper is to extend the result to the case~$p^+\leq \infty$.
Our main result is as follows:
\begin{theorem}
  \label{thm:main}
  Let $p\in \mathcal{A}\cap\mathcal{N}$ with $p^->1$. Then there exists a constant $c>0$ depending on $(p')^+,\,[p]_{\mathcal{A}},\,[p]_{\mathcal{N}}$ and $n$ such that for any $f\in L^{p(\cdot)}(\mathbb{R}^{n})$
  \begin{align}
    \label{eq:maininequality}
    \norm{Mf}_{p(\cdot)}\leq c\,\norm{f}_{p(\cdot)}.
  \end{align}
\end{theorem}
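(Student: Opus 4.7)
The case $p^+<\infty$ is essentially the theorem of Cruz-Uribe--Fiorenza~\cite[Theorem~4.52]{CruFio13}, so the plan is to reduce the unbounded case to the bounded one by an approximation that preserves the Muckenhoupt and Nekvinda constants uniformly. Concretely, I would introduce the truncations $p_k(x)\coloneqq\min(p(x),k)$ with $p_{k,\infty}\coloneqq\min(p_\infty,k)$ for $k>p^-$. These satisfy $p_k^-=p^->1$ and $p_k^+\le k<\infty$, so $(p_k')^+=(p')^+$ independently of~$k$, and $p_k\nearrow p$ pointwise.

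\textbf{Step 1: Nekvinda is preserved.} A case analysis (depending on whether $p(x)$ and $p_\infty$ are above or below $k$) should yield the pointwise bound
\begin{align*}
  \bigabs{\tfrac{1}{p_k(x)}-\tfrac{1}{p_{k,\infty}}}\le\bigabs{\tfrac{1}{p(x)}-\tfrac{1}{p_\infty}},
\end{align*}
and therefore $[p_k]_{\mathcal{N}}\le[p]_{\mathcal{N}}$, uniformly in~$k$. This part should be routine.

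\textbf{Step 2: Muckenhoupt is preserved.} This is the key technical step and, I expect, the main obstacle. I need a constant $C$ depending only on $[p]_{\mathcal{A}}$, $(p')^+$ and $n$ with
\begin{align*}
  \tfrac{1}{|Q|}\norm{\indicator_Q}_{p_k(\cdot)}\norm{\indicator_Q}_{(p_k)'(\cdot)}\le C\qquad\text{for all cubes }Q.
\end{align*}
The inequality $p_k\le p$ gives the easy bound $\norm{\indicator_Q}_{p_k(\cdot)}\le\norm{\indicator_Q}_{p(\cdot)}+|Q|^{1/k}$, but the dual side is tricky, since $(p_k)'$ saturates at $k'=k/(k-1)$ on the set $\{p>k\}$, which is close to~$1$. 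I would split~$Q$ into the part where $p\le k$ (where $p_k=p$, so both norms coincide with the norms for~$p$) and the part where $p>k$, and on the latter use the dual characterisation of $\norm{\indicator_Q}_{p'(\cdot)}$ through the $\mathcal{A}$-condition applied to the smaller scale, combined with the $\norm\cdot_{p(\cdot)}$--$\norm\cdot_{p'(\cdot)}$ duality identity. I expect the novel identity mentioned in the abstract to encode exactly the cancellation that prevents a blow-up as $k\to\infty$.

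\textbf{Step 3: Finish by bounded-exponent result and limit.} Once Steps~1 and~2 are in place, $p_k\in\mathcal{A}\cap\mathcal{N}$ with bounded exponent, and \cite[Theorem~4.52]{CruFio13} gives $\norm{Mf}_{p_k(\cdot)}\le c_k\norm{f}_{p_k(\cdot)}$. To make sure that $c_k$ depends only on $(p_k')^+=(p')^+$, $[p_k]_{\mathcal{A}}$, $[p_k]_{\mathcal{N}}$, $n$ rather than on $p_k^+$, I would revisit the Cruz-Uribe--Fiorenza argument (which leans on Lerner's local bound~\cite{Ler10}) and verify that $p^+$ enters only via $(p')^+$; this is plausible because Lerner's scheme dualises the problem to $p'$ and then uses the $\mathcal{A}$-condition together with the decay. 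Finally, taking $k\to\infty$: for every $f\in L^{p(\cdot)}$, splitting $\{|f|\le1\}$ and $\{|f|>1\}$ together with monotone/dominated convergence gives $\norm{f}_{p_k(\cdot)}\to\norm{f}_{p(\cdot)}$, and Fatou's lemma for the modular yields $\norm{Mf}_{p(\cdot)}\le\liminf_k\norm{Mf}_{p_k(\cdot)}$. Combining these with the uniform estimate closes the proof.

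\textbf{Main obstacle.} The whole argument hinges on Step~2: truncating~$p$ from above tends to push $p'$ toward~$1$ on $\{p>k\}$, which is exactly where the Muckenhoupt functional is most sensitive. Showing that the primal gain from $p_k\le p$ compensates the dual loss, with a constant independent of~$k$, is where the creative input lies.
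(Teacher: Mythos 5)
Your overall strategy — approximate the unbounded exponent by bounded ones with uniform control of $[\,\cdot\,]_{\mathcal{A}}$ and $[\,\cdot\,]_{\mathcal{N}}$, apply the bounded-exponent result with constants independent of $p^+$, and pass to the limit — is exactly the paper's strategy. However, your specific approximation $p_k=\min(p,k)$ leaves the crucial Step~2 genuinely unresolved, and you say so yourself. The difficulty is structural, not merely technical: $\min(p,k)$ does not commute with duality, since $(\min(p,k))'=\max(p',k')$, so on the set $\{p>k\}$ the truncation pushes the dual exponent \emph{up to} $k'$ rather than mirroring whatever was done to $p$. This breaks the symmetry that makes the Muckenhoupt condition (with its built-in $p\leftrightarrow p'$ duality) tractable. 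I do not see a way to close Step~2 as stated, and no argument is offered beyond a hope that "the novel identity" will supply the cancellation.

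The paper's construction is different and dodges the obstacle entirely: it sets
\begin{align*}
  \frac{1}{p_k(x)}=\frac{1}{k+1}+\frac{k-1}{k+1}\,\frac{1}{p(x)},
\end{align*}
a convex combination of $\tfrac12$ and $\tfrac1p$ (a contraction toward the self-dual exponent $2$). This gives $1+\tfrac1k\le p_k^-\le p_k^+\le k+1$ and — crucially — it commutes with duality: $\tfrac{1}{p_k'}=\tfrac{1}{k+1}+\tfrac{k-1}{k+1}\tfrac{1}{p'}$. The Muckenhoupt bound then follows in two lines from the interpolation estimate $\norm{\indicator_Q}_{p_k(\cdot)}\le 2\abs{Q}^{1/(k+1)}\norm{\indicator_Q}_{p(\cdot)}^{(k-1)/(k+1)}$ and the identical estimate for $p_k'$, yielding $[p_k]_{\mathcal{A}}\le 8[p]_{\mathcal{A}}$ without any case analysis on $\{p\gtrless k\}$. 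The Nekvinda bound is preserved because the map $t\mapsto\tfrac{1}{k+1}+\tfrac{k-1}{k+1}t$ is a contraction, so $\bigabs{\tfrac1{p_k}-\tfrac1{(p_k)_\infty}}\le\bigabs{\tfrac1p-\tfrac1{p_\infty}}$ pointwise.

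One further point on Step~3: your limit argument, "monotone/dominated convergence gives $\norm{f}_{p_k(\cdot)}\to\norm{f}_{p(\cdot)}$," is too optimistic as stated. When $p^+=\infty$ the modular $\rho_{\px}$ is not continuous under pointwise convergence of exponents (the paper's Section~3 gives explicit counterexamples), and the convergence result used (Lemma~\ref{lem:approxpxmodular}) requires working with the normalised semi-modular $\tilde\rho_{\px}$ and finiteness of $\tilde\rho$ at the extreme exponents. The paper first truncates $f$ to $f_m=\indicator_{B_m}\min\{f,m\}\in L^2$, takes $k\to\infty$ at fixed $m$ using $\tilde\rho$-convergence, and only then lets $m\to\infty$ via the Fatou property. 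You would need this two-step truncation and the $\tilde\rho$ machinery to make your Step~3 rigorous as well.
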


\section{Variable exponent spaces}
\label{sec:preliminaries}

Let us repeat some basic facts on Lebesgue spaces with variable exponents. We mainly use the notation of~\cite{DieHarHasRuz11}, but another good reference is~\cite{CruFio13}.

A variable exponent $p$ on~$\RRn$ is a measurable function $p\,:\, \RRn \to [1,\infty]$. The set of variable exponents on~$\RRn$ is denoted by $\mathcal{P}(\RRn)$. We define
\begin{align}
  \label{eq:pplusminus}
  p^- &\coloneqq \essinf_{x \in \RRn} p(x) \qquad \text{and} \qquad
  p^+ \coloneqq \esssup_{x \in \RRn} p(x).
\end{align}
By $p'$ we denote the conjugate exponent, i.e. $\frac{1}{p} + \frac{1}{p'}=1$.  Let $L^0(\RRn)$ denote the set of measurable functions on~$\RRn$. Then, for $f \in L^0(\RRn)$ we define the semi-modular
\begin{align}
  \label{eq:modular}
  \rho_{\px}(f) &\coloneqq \int_{\RRn}  \abs{f(x)}^{p(x)}\,dx
\end{align}
with the convention that $t^\infty \coloneqq 0$ for $t \in [0,1]$ and $t^\infty \coloneqq \infty$ for $t >1$.  The variable Lebesgue space $L^{p(\cdot)}(\RRn)$ is the set of $f \in L^0(\RRn)$ such that $\norm{f}_\px < \infty$, where the norm is defined by
\begin{align}
  \label{eq:norm}
  \norm{f}_{p(\cdot)}\coloneqq\inf \left\lbrace \lambda > 0: 
    \rho_{p(\cdot)} (f/ \lambda) \leq 1 \right\rbrace.
\end{align}
With this norm $L^{p(\cdot)}(\RRn)$ is a Banach space, see \cite[Theorem~3.2.7]{DieHarHasRuz11}. If $p$ is constant, then we recover the classical Lebesgue spaces.

Note that $\rho_\px(f) \leq 1$ is equivalent to $\norm{f}_{\px} \leq 1$.  Moreover, if $p^+<\infty$, then $\rho_\px(f) = 1$ is equivalent to $\norm{f}_{\px} = 1$. Both properties are called \emph{unit ball property}, see \cite[Lemma~2.1.14]{DieHarHasRuz11}. We will also use the following simple trick:
\begin{align}
  \label{eq:trick}
  \text{If $\rho_{\px}(f) \leq a b$ with $a \geq 1$ and $b \in (0,1]$, then $\norm{f}_{\px} \leq a b^{\frac{1}{p^+}}$.}
\end{align}
\begin{remark}
  \label{rem:modulars}
  There are other semi-modulars and modulars that can be used to define $L^{\px}(\RRn)$ with equivalent norms. The most prominent are
  \begin{subequations}
    \label{eq:alternative-modulars}
    \begin{align}
      \label{eq:alternative-modulars-tile}
      \tilde{\rho}_{\px}(f) &\coloneqq \int_{\RRn} \frac{1}{p(x)} \abs{f(x)}^{p(x)}\,dx,
      \\
      \rhoER_{\px}(f) &\coloneqq \max \biggset{\int_{\set{p \neq \infty}}  \abs{f(x)}^{p(x)}\,dx, \norm{f\,\indicator_{\set{p=\infty}}}_\infty},
      \\
      \rhoKR_{\px}(f) &\coloneqq \int_{\set{p \neq \infty}}  \abs{f(x)}^{p(x)}\,dx + \norm{f\,\indicator_{\set{p=\infty}}}_\infty.
    \end{align}
  \end{subequations}
  Note that $\tilde{\rho}_\px$ and $\rho_\px$ are semi-modulars and $\rhoER_{\px}$ and $\rhoKR_{\px}$ are modulars, e.g. $\rhoER_{\px}(f)=0$ implies~$f=0$.  If $p^+<\infty$ or $\set{p=\infty}$ has measure zero, then $\rho_{\px}(f) = \rhoER_\px(f) = \rhoKR_\px(f)$.  We have the following relations
  \begin{subequations}
    \begin{alignat}{4}
      \tilde{\rho}_\px(f) &\leq \rho_\px(f) &&\leq \rhoER_\px(f) &&\leq \rhoKR_\px(f)
      \\
      \tilde{\rho}_\px(f) &\leq \rho_\px(f) &&\leq \tilde{\rho}_{\px}(2f),
    \end{alignat}
  \end{subequations}
  The induced norms satisfy the relation
  \begin{align}
    \label{eq:norm-equivalence}
    \norm{f}_{\tilde{\rho}_\px} &\leq \norm{f}_\px = \norm{f}_{\rhoER_\px} \leq \norm{f}_{\rhoKR_\px} \leq 2 \norm{f}_{\tilde{\rho}_\px}.
  \end{align}
  The equality $\norm{f}_\px = \norm{f}_{\rhoER_\px}$ has been shown in~\cite{KarSha22}. All other relations can be found in \cite[Lemma 3.1.6 and Remark~3.2.3]{DieHarHasRuz11}. The modular~$\rhoER_{\px}$ was introduced in~\cite{EdmundsRakosnik2000}. The modular $\rhoKR$ appeared in \cite{KovRak91} and is mainly used in the book \cite{CruFio13}. The semi-modulars~$\rho_\px$ and $\tilde{\rho}_\px$ are studied in~\cite{DieHarHasRuz11}.
\end{remark}
For $p \in \mathcal{P}(\RRn)$ we define its dual exponent~$p' \in \mathcal{P}(\RRn)$ by $\frac{1}{p(x)} + \frac{1}{p'(x)} = 1$.  Then we have the generalized H\"older's inequality, see \cite[Lemma~3.2.20]{DieHarHasRuz11}: If $p,q,s \in \mathcal{P}(\RRn)$ with $\frac{1}{s(x)} = \frac{1}{p(x)} + \frac{1}{q(x)}$, then we have H\"older's inequality
\begin{align}
  \label{eq:genhoelder}
  \norm{f g}_{s(\cdot)} &\leq 2\,\norm{f}_{\px} \norm{f}_{q(\cdot)}.
\end{align}
If $s \geq 1$ and $p \in \mathcal{P}(\RRn)$, then
\begin{align}\label{eq:multiplepower}
  \bignorm{ \abs{f}^s }_{\px} &=
  \bignorm{f }_{s\px}^s.
\end{align}
If $p_0,p_1 \in \mathcal{P}(\RRn)$ and $\frac{1}{p_\theta(x)} = \frac{1-\theta}{p_0(x)} + \frac{\theta}{p_1(x)}$ for some $\theta \in [0,1]$, then we have the following interpolation estimate
\begin{align}
  \label{eq:interpolation}
  \norm{f}_{p_\theta(\cdot)} \leq 2 \norm{f}^{1-\theta}_{p_0(\cdot)} \norm{f}^\theta_{p_1(\cdot)}.
\end{align}
which follows by~\eqref{eq:genhoelder} and~\eqref{eq:multiplepower}.

If $p^+<\infty$, then the  dual space of $L^\px(\RRn)$ can be identified with $L^\pdx(\RRn)$, see \cite[Theorem~3.4.6]{DieHarHasRuz11}. In the general case $p^+ \leq \infty$ the space $L^\pdx(\RRn)$  is still the associate Banach function space of~$L^\px(\RRn)$, see \cite[Theorem~3.2.13]{DieHarHasRuz11} and we have the following characterization of the norm, see \cite[Corollary~3.2.14]{DieHarHasRuz11}.
\begin{lemma}[Norm conjugate formula]
  \label{lem:norm-formula}
  Let $p \in \mathcal{P}(\RRn)$. Then
  \begin{align*}
    \tfrac 12 \norm{f}_\px
    \leq \sup_{\substack{g \in L^\pdx(\RRn) \\ \norm{g}_{\pdx} \leq 1}} \int_\RRn \abs{f}\,\abs{g}\,dx
    \leq 2 \norm{f}_\px.
  \end{align*}
  The supremum is unchanged if we only use~$g$, which are bounded and have compact support.
\end{lemma}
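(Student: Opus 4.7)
The plan is to establish the two inequalities separately and then a density statement for the test class.

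The upper bound is immediate from the generalized Hölder inequality \eqref{eq:genhoelder} with $s\equiv 1$ and $q=p'$: for any admissible $g$ we have $\int_{\RRn}|f||g|\,dx=\norm{fg}_1\le 2\norm{f}_{\px}\norm{g}_{\pdx}\le 2\norm{f}_{\px}$.

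For the lower bound I would first dispatch the bounded exponent case $p^+<\infty$. By homogeneity assume $\norm{f}_{\px}=\lambda>0$; the unit ball property then gives $\rho_\px(f/\lambda)=1$. Set $g(x)\coloneqq\bigl(\abs{f(x)}/\lambda\bigr)^{p(x)-1}$. A direct computation yields $\int_{\RRn}\abs{f}\abs{g}\,dx=\lambda\,\rho_\px(f/\lambda)=\lambda$, while $\rho_{\pdx}(g)=\int (\abs{f}/\lambda)^{(p-1)p'}\,dx=\rho_\px(f/\lambda)=1$, so $\norm{g}_{\pdx}\le 1$ by the unit ball property. This already gives the lower bound with constant $1$ in the bounded case.

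For the general case $p^+\le\infty$ I would split $f=f\indicator_{E_\infty^c}+f\indicator_{E_\infty}$ with $E_\infty\coloneqq\set{p=\infty}$, and argue by comparing the contributions of the two pieces to $\norm{f}_\px$. On $E_\infty^c$ the power-function choice above works verbatim. On $E_\infty$ note that $p'=1$, so $\norm{\cdot}_{\pdx}$ restricted there is the $L^1$-norm; given $\mu<\norm{f\indicator_{E_\infty}}_{\infty}$ pick a finite-measure subset $A\subset E_\infty\cap\set{\abs{f}>\mu}$ and use $g=\abs{A}^{-1}\indicator_A$, yielding $\int\abs{f}\abs{g}\,dx\ge\mu$ with $\norm{g}_{\pdx}\le 1$. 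Whichever piece carries at least half of $\norm{f}_\px$ determines which test function to use; rescaling the chosen $g$ so that $\norm{g}_{\pdx}\le 1$ and absorbing the $1/2$ from the splitting produces the claimed constant. The arithmetic of combining the two bounds (and extracting the factor $1/2$ rather than a worse constant) is the main technical point, and it is the reason the constants in \eqref{eq:norm-equivalence} appear in the final estimate.

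For the last statement about bounded, compactly supported $g$, given an arbitrary admissible $g$ I would truncate $g_R\coloneqq\operatorname{sign}(g)\min(\abs{g},R)\indicator_{B_R}$, observe $\norm{g_R}_{\pdx}\le\norm{g}_{\pdx}\le 1$ since the modular is monotone in $\abs{g}$, and then pass to the limit $R\to\infty$ in $\int\abs{f}\abs{g_R}\,dx$ by monotone convergence. The main obstacle is the construction on $E_\infty$; everything else is a straightforward modular calculation relying on the unit ball property.
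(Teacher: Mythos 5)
Your upper bound (H\"older), your construction on $\set{p=\infty}$, and the final truncation argument for restricting to bounded, compactly supported $g$ are all fine; note also that the paper does not prove this lemma at all but quotes it from \cite[Theorem~3.2.13, Corollary~3.2.14]{DieHarHasRuz11}, so the only question is whether your argument is sound. It is not, at one crucial point: the sentence ``on $E_\infty^c$ the power-function choice above works verbatim''. The restriction of $p$ to $\set{p<\infty}$ may still be unbounded (e.g.\ $p(x)=1+\abs{x}$, where $E_\infty=\emptyset$ and your whole general case collapses to exactly this step), and then the unit ball equality $\rho_\px(f/\lambda)=1$ at $\lambda=\norm{f}_\px$ --- the only place where $p^+<\infty$ entered your bounded-exponent argument --- genuinely fails: the modular can jump from a value strictly below $1$ to $\infty$ at the norm. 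Concretely, take disjoint sets $A_k$ with $p\equiv k^2$ and $\abs{f}\equiv c_k$ on $A_k$, chosen so that $\abs{A_k}\,c_k^{k^2}=\varepsilon\, 2^{-k}$; then $\rho_\px(f)=\varepsilon$ while $\rho_\px(f/\lambda)=\infty$ for every $\lambda<1$, so $\norm{f}_\px=1$ but your test function $g=\abs{f}^{p(\cdot)-1}$ only certifies $\int\abs{f}\,\abs{g}\,dx=\varepsilon$, which is arbitrarily small. So as written the proposal gives no uniform lower bound on $\set{p<\infty}$, which is precisely the hard case the lemma is meant to cover (and the case the paper needs).

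The step can be repaired without the unit ball equality. Fix $\lambda<\norm{f}_\px$; then $\rho_\px(f/\lambda)>1$, an implication valid for every exponent. Replacing $\abs{f}$ by $\abs{f}\,\indicator_{E_k}$ with $E_k\coloneqq\set{x\in B_k(0)\cap\set{p<\infty}:(\abs{f(x)}/\lambda)^{p(x)}\le k}$ and using monotone convergence (Lemma~\ref{lem:fatou}), you may assume $A\coloneqq\rho_\px(f/\lambda)\in(1,\infty)$ while keeping the pairing from below. Now take $g\coloneqq A^{-1}(\abs{f}/\lambda)^{p(\cdot)-1}$: since $A>1$ and $p'\ge 1$ pointwise, $\rho_\pdx(g)\le A^{-1}\rho_\px(f/\lambda)=1$, so $\norm{g}_\pdx\le 1$, while $\int\abs{f}\,\abs{g}\,dx=\lambda A^{-1}\rho_\px(f/\lambda)=\lambda$. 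Combining this with your $\set{p=\infty}$ construction via the splitting (one of the two pieces has norm at least $\tfrac12\norm{f}_\px$) yields the stated lower bound; the factor $\tfrac12$ comes from this splitting (or from Young's inequality in the conjugate-semimodular proof of \cite{DieHarHasRuz11}), not from the modular equivalences \eqref{eq:norm-equivalence}, which play no role here. You should also say a word about the case $\norm{f}_\px=\infty$, which the same truncation handles.
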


Let $r,s \in \mathcal{P}(\RRn)$. Since $L^{r(\cdot)}(\RRn) \embedding L^1_{\loc}(\RRn)$ and $L^{s(\cdot)}(\RRn) \embedding L^1_{\loc}(\RRn)$ we can consider the usual intersection $L^{r(\cdot)}(\RRn) \cap L^{s(\cdot)}(\RRn)$ and sum $L^{r(\cdot)}(\RRn) + L^{s(\cdot)}(\RRn)$ with the norms
\begin{align*}
  \|f\|_{L^{r(\cdot)} \cap L^{s(\cdot)}} &= \max\{\|f\|_{r(\cdot)}, \|f\|_{s(\cdot)}\}, \\
  \|f\|_{L^{r(\cdot)} + L^{s(\cdot)}} &= \inf_{\set{f = g + h,\, g \in L^{r(\cdot)}, h \in L^{s(\cdot)}}} (\|g\|_{r(\cdot)} + \|h\|_{s(\cdot)}).
\end{align*}
Then we have the following embedding result proved in \cite[Theorem~3.3.11]{DieHarHasRuz11}
\begin{lemma}
  \label{lem:embedding}
  Let $p,q,r \in \mathcal{P}(\RRn)$ with $r \leq p \leq s$. Then
  \begin{align*}
    L^{r(\cdot)}(\RRn) \cap
    L^{s(\cdot)}(\RRn) \embedding
    L^{p(\cdot)}(\RRn)
    \embedding
    L^{r(\cdot)}(\RRn) +
    L^{s(\cdot)}(\RRn).
  \end{align*}
  The embedding constants are at most~$2$. The set~$\RRn$ can be replaced by subsets.
\end{lemma}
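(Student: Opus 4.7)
The plan is to handle both embeddings by direct modular estimates, exploiting the fact that the monotonicity of $t \mapsto t^{q}$ in the exponent $q$ reverses across $t = 1$: it is decreasing in $q$ when $t \le 1$ and increasing when $t > 1$. Throughout, I would translate between norm and modular via the unit ball property.

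For the first embedding $L^{r(\cdot)}(\RRn) \cap L^{s(\cdot)}(\RRn) \embedding L^{p(\cdot)}(\RRn)$, I would normalize so that $\|f\|_{r(\cdot)} \leq 1$ and $\|f\|_{s(\cdot)} \leq 1$, giving $\rho_{r(\cdot)}(f) \leq 1$ and $\rho_{s(\cdot)}(f) \leq 1$. Split $\RRn = E_1 \cup E_2$ with $E_1 = \{|f| \leq 1\}$ and $E_2 = \{|f| > 1\}$. On $E_1$, since $r(x) \leq p(x)$ and $|f(x)| \leq 1$, the pointwise inequality $|f(x)|^{p(x)} \leq |f(x)|^{r(x)}$ holds (consistent with the convention $t^\infty = 0$ for $t \in [0,1]$). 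On $E_2$, since $p(x) \leq s(x)$ and $|f(x)| > 1$, we get $|f(x)|^{p(x)} \leq |f(x)|^{s(x)}$; note that $\rho_{s(\cdot)}(f) < \infty$ forces $\{|f| > 1,\, s(x) = \infty\}$ to have measure zero, so no infinity problems arise. Integrating yields $\rho_{p(\cdot)}(f) \leq \rho_{r(\cdot)}(f) + \rho_{s(\cdot)}(f) \leq 2$. The rescaling $\rho_{p(\cdot)}(f/2) \leq \tfrac{1}{2}\rho_{p(\cdot)}(f) \leq 1$ (valid because $2^{-p(x)} \leq \tfrac{1}{2}$ for $p(x) \geq 1$, and because modular finiteness forces $|f| \leq 1$ a.e. on $\{p = \infty\}$) then gives $\|f\|_{p(\cdot)} \leq 2$.

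For the second embedding $L^{p(\cdot)}(\RRn) \embedding L^{r(\cdot)}(\RRn) + L^{s(\cdot)}(\RRn)$, I would normalize so that $\|f\|_{p(\cdot)} \leq 1$, hence $\rho_{p(\cdot)}(f) \leq 1$. Decompose $f = g + h$ with $g = f\indicator_{E_2}$ and $h = f\indicator_{E_1}$. Pointwise one has $|g(x)|^{r(x)} \leq |g(x)|^{p(x)}$ (either $g(x) = 0$, or $|g(x)| > 1$ with $r(x) \leq p(x)$) and $|h(x)|^{s(x)} \leq |h(x)|^{p(x)}$ (either $h(x) = 0$, or $|h(x)| \leq 1$ with $p(x) \leq s(x)$). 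Integrating and using $\rho_{p(\cdot)}(f) \leq 1$ gives $\rho_{r(\cdot)}(g) \leq 1$ and $\rho_{s(\cdot)}(h) \leq 1$, so $\|g\|_{r(\cdot)} + \|h\|_{s(\cdot)} \leq 2$, establishing the embedding with constant at most $2$.

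The only real subtlety is tracking the convention $t^\infty \in \{0, \infty\}$ at points where the exponent is infinite; once one observes that a finite modular rules out the problematic configuration of $|f| > 1$ on a positive-measure set where the relevant exponent is $\infty$, the pointwise comparisons are valid a.e.\ and the argument is routine. Replacing $\RRn$ by a measurable subset changes nothing, as the modulars are simply integrals over the chosen domain and the pointwise inequalities used make no reference to the ambient space.
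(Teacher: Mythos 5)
Your argument is correct: the level-set splitting at $|f|=1$ with the pointwise monotonicity of $t\mapsto t^{q}$ (and the careful handling of the $t^{\infty}$ convention, plus the rescaling $\rho_{p(\cdot)}(f/2)\le\tfrac12\rho_{p(\cdot)}(f)$ to get the constant $2$ without invoking $p^{+}$) is exactly the standard modular proof of this embedding. The paper itself gives no proof but cites \cite[Theorem~3.3.11]{DieHarHasRuz11}, whose argument is essentially the one you wrote, so there is nothing to add.
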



\section{Limits of functions and exponents}
\label{sec:limits-funct-expon}
  
In this section we study limits of functions and variable exponents in the setting of possibly unbounded variable exponents. The properties depend strongly on the chosen semi-modular.
Let us begin with limits of functions. It has been shown in \cite[Corollary~3.4.10]{DieHarHasRuz11} that $L^\px(\RRn) \cap L^\infty(\RRn)$ is dense in~$L^\px(\RRn)$ if~$p^+<\infty$. However, \cite{Kalyabin2007} showed that in general the condition~$p^+<\infty$ cannot be dropped. This makes approximations in~$L^\px(\RRn)$ difficult. Therefore, we will rely on the following convergence result, see \cite[Lemma~3.2.8, Theorem~2.3.17]{DieHarHasRuz11}.
\begin{lemma}[Fatou property]
  \label{lem:fatou}
  Let $f_k,f \in L^0(\RRn)$ and $\abs{f_k} \nearrow \abs{f}$ almost everywhere. Then
  \begin{align*}
    \rho_\px(f) &= \lim_{k \to \infty} \rho_{\px}(f_k) \qquad \text{and} \qquad
    \norm{f}_\px = \lim_{k \to \infty} \norm{f_k}_\px
    \\
    \tilde{\rho}_\px(f) &= \lim_{k \to \infty} \tilde{\rho}_{\px}(f_k) \qquad \text{and} \qquad
    \norm{f}_{\tilde{\rho}_\px} = \lim_{k \to \infty} \norm{f_k}_{\tilde{\rho}_\px}.
  \end{align*}
\end{lemma}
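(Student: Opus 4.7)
My plan is to derive both statements from pointwise monotone convergence of the integrand combined with the classical Monotone Convergence Theorem (MCT), and then transfer the modular limit to a norm limit using the unit ball property.

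For the modular equality I would first verify that, for almost every~$x$, the scalar sequence $\abs{f_k(x)}^{p(x)}$ is monotone non-decreasing with limit $\abs{f(x)}^{p(x)}$. When $p(x)<\infty$ this is immediate from continuity and monotonicity of $t \mapsto t^{p(x)}$ on $[0,\infty)$. On the set $\set{p=\infty}$ one must check the subcases allowed by the convention $t^\infty = 0$ for $t \in [0,1]$ and $t^\infty=\infty$ for $t>1$: if $\abs{f(x)}\le 1$ then every $\abs{f_k(x)} \le 1$ and both sides vanish, while if $\abs{f(x)}>1$ then $\abs{f_k(x)}>1$ for all sufficiently large~$k$, so the sequence eventually equals $\infty = \abs{f(x)}^\infty$. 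In either subcase the sequence is monotone with the correct limit, so the MCT yields $\rho_\px(f) = \lim_k \rho_\px(f_k)$. The argument for $\tilde{\rho}_\px$ is identical, since the weight $1/p(x)$ does not depend on~$k$.

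For the norm statement, monotonicity of the norm gives $L \coloneqq \lim_k \norm{f_k}_\px \le \norm{f}_\px$, so it remains to show $L \ge \norm{f}_\px$. I may assume $L<\infty$. For any $\lambda>L$ one has $\norm{f_k/\lambda}_\px < 1$, hence $\rho_\px(f_k/\lambda) \le 1$ by the unit ball property. Passing to the limit using the first step gives $\rho_\px(f/\lambda) \le 1$, so $\norm{f}_\px \le \lambda$, and letting $\lambda \searrow L$ concludes. The same reasoning handles $\norm{\cdot}_{\tilde{\rho}_\px}$. The only genuine subtlety in the whole argument is the verification of pointwise monotonicity on $\set{p=\infty}$, where $t \mapsto t^\infty$ has a jump at~$t=1$; once that case analysis is settled the lemma is a direct consequence of MCT and the unit ball property.
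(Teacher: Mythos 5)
Your proof is correct, and since the paper does not supply its own argument for this lemma (it cites Lemma~3.2.8 and Theorem~2.3.17 of the reference book of Diening, Harjulehto, H\"ast\"o and R{\r{u}}\v{z}i\v{c}ka), the relevant comparison is with the standard argument there --- which is essentially what you give. Your case analysis on $\set{p=\infty}$ is the right thing to be careful about: the map $t\mapsto t^\infty$ is monotone but jumps at $t=1$, and you correctly observe that under \emph{monotone increasing} convergence the jump cannot produce a discrepancy (both sides are $0$ when $\abs{f(x)}\le 1$, and the sequence is eventually $+\infty$ when $\abs{f(x)}>1$), so MCT applies; the norm statement then follows from the modular one via the unit ball property exactly as you describe, with the degenerate cases $L=0$ and $L=\infty$ handled by your reduction.
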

The approximation of the variable exponents~$p$ by a sequence $p_k$ is more subtle and depends strongly on the chosen semi-modular. The best semi-modular in this context is~$\tilde{\rho}_\px$. The following lower semi-continuity has been proved in \cite[Corollary~3.5.4]{DieHarHasRuz11}.
\begin{lemma}[Lower semi-continuity]
  \label{lem:lsc}
  Let $p_k,p \in \mathcal{P}(\RRn)$ with $p_k \to p$ almost everywhere. Then
  \begin{align*}
    \rho_{p(\cdot)}(f) &\leq \liminf_{k \to \infty} \rho_{p_k(\cdot)}(f) \qquad \text{and} \qquad
    \norm{f}_{\px} \leq \liminf_{k \to \infty} \norm{f}_{p_k(\cdot)},
    \\
    \tilde\rho_{p(\cdot)}(f) &\leq \liminf_{k \to \infty} \tilde\rho_{p_k(\cdot)}(f) \qquad \text{and} \qquad
    \norm{f}_{\tilde{\rho}_{p(\cdot)}} \leq \liminf_{k \to \infty} \norm{f}_{\tilde{\rho}_{p_k(\cdot)}}.
  \end{align*}
\end{lemma}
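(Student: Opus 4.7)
The plan is to reduce both modular inequalities to pointwise lower semi-continuity of the integrands combined with Fatou's lemma, and then to deduce the norm inequalities from the modular ones via the unit ball property.

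For the modular $\rho_{\px}$, the key pointwise fact is that for every $a \in [0,\infty)$ and every sequence $q_k \to q$ in $[1,\infty]$,
\[
a^{q} \leq \liminf_{k\to\infty} a^{q_k},
\]
under the conventions $t^\infty = 0$ for $t\in[0,1]$ and $t^\infty = \infty$ for $t>1$. If $q<\infty$, then eventually $q_k<\infty$ and one even has $a^{q_k}\to a^q$ by continuity of $(a,q)\mapsto a^q$ on $[0,\infty)\times(0,\infty)$; if $q=\infty$ and $a\leq 1$, the left-hand side is $0$ so the inequality is automatic; and if $q=\infty$ and $a>1$, then $q_k\to\infty$ forces $a^{q_k}\to\infty$, matching the left-hand side. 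Applying this pointwise a.e.\ with $a = \abs{f(x)}$ and $q_k = p_k(x)$, and then invoking Fatou's lemma for non-negative extended-real-valued functions, gives
\[
\rho_{\px}(f) \leq \int_{\RRn} \liminf_{k\to\infty} \abs{f(x)}^{p_k(x)}\,dx \leq \liminf_{k\to\infty} \rho_{p_k(\cdot)}(f).
\]
The same argument applied to the integrand $\frac{1}{p(x)}\abs{f(x)}^{p(x)}$ handles $\tilde\rho$: the extra factor $\frac{1}{p_k(x)} \in (0,1]$ converges to $\frac{1}{p(x)}$, so pointwise lower semi-continuity of the product survives, and on $\{p=\infty\}$ the convention is chosen so that $\tilde\rho_{\px}$ and $\rho_{\px}$ coincide there.

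For the norm statement, fix $\lambda > \liminf_k \norm{f}_{p_k(\cdot)}$ and choose a subsequence $(k_j)$ with $\norm{f}_{p_{k_j}(\cdot)} < \lambda$. Then $\norm{f/\lambda}_{p_{k_j}(\cdot)} < 1$, so the unit ball property yields $\rho_{p_{k_j}(\cdot)}(f/\lambda) \leq 1$ for every $j$. Applying the modular LSC to the subsequence (which still converges to $p$ a.e.) gives
\[
\rho_{\px}(f/\lambda) \leq \liminf_{j\to\infty} \rho_{p_{k_j}(\cdot)}(f/\lambda) \leq 1,
\]
hence $\norm{f}_{\px} \leq \lambda$. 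Letting $\lambda$ decrease to $\liminf_k \norm{f}_{p_k(\cdot)}$ finishes the proof, and the identical chain with $\tilde\rho$ replacing $\rho$ produces the $\tilde\rho$-norm inequality.

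The only delicate point is the behavior at $p(x)=\infty$: the convention for $t^\infty$ makes $q\mapsto a^q$ discontinuous at $a=1$, jumping from $1$ down to $0$. Crucially this jump is in the \emph{downward} direction, which is precisely what lower semi-continuity permits, so no inequality is lost; all other cases are covered by ordinary continuity of the exponential, and the entire argument then reduces to Fatou's lemma plus the definition of the Luxemburg norm.
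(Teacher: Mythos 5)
Your proof is correct, and it is essentially the natural route to this result. Since the paper's ``proof'' is a bare citation of \cite[Corollary~3.5.4]{DieHarHasRuz11}, let me briefly confirm the structure of your argument and note where the genuine work lies.

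The core is the pointwise inequality
\[
\varphi_{p(x)}\bigl(\abs{f(x)}\bigr) \leq \liminf_{k\to\infty} \varphi_{p_k(x)}\bigl(\abs{f(x)}\bigr)
\qquad\text{a.e.,}
\]
for both $\varphi_q(t) = t^q$ and $\varphi_q(t) = \tfrac 1q t^q$, which you check case-by-case. You correctly identify that the only non-trivial case is $p(x)=\infty$, $\abs{f(x)}=1$, where the convention $1^\infty=0$ introduces a \emph{downward} jump, and you correctly observe that lower semi-continuity tolerates exactly this direction. Combined with Fatou's lemma for nonnegative measurable functions, this gives the modular inequalities. For $\tilde\rho$ it is worth spelling out the case $p(x)=\infty$, $\abs{f(x)}>1$ slightly more: there $\tilde\varphi_\infty(\abs{f(x)})=\infty$, and since $a^{p_k(x)}/p_k(x)\to\infty$ whenever $a>1$ and $p_k(x)\to\infty$ (exponential beats the polynomial factor), the $\liminf$ on the right is indeed $\infty$. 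Your phrase ``the product survives'' papers over this, though the content is right.

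The passage from modular LSC to norm LSC via the unit ball property is exactly the standard reduction, and you apply it correctly: from $\norm{f/\lambda}_{p_{k_j}(\cdot)}<1$ you get $\rho_{p_{k_j}(\cdot)}(f/\lambda)\leq 1$, push through the modular LSC along the subsequence, and recover $\norm{f}_\px\leq\lambda$. One small point to make explicit is that the unit ball property (as quoted from \cite[Lemma~2.1.14]{DieHarHasRuz11}) holds for any left-continuous semi-modular, so it applies equally to $\tilde\rho_\px$; you use this silently. Overall the approach matches the one in the cited reference, which proves the same statement in the more general Musielak--Orlicz setting by the same mechanism (left-continuity/LSC of the $\varphi$-functions plus Fatou), so there is no genuine methodological divergence here.
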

In some cases it is possible to obtain equality in the convergence of the modular. The following has been proved in~\cite[Lemma~3.5.5]{DieHarHasRuz11}.
\begin{lemma}[Approximation of semi-modular]
  \label{lem:approxpxmodular}
  Let $r,s,p_k,p \in \mathcal{P}(\RRn)$ such that $r \leq p_k \leq s$ and $p_k \to p$ almost everywhere. Then for each $f \in L^0(\RRn)$ with $\tilde\rho_{r(\cdot)}(f),\tilde\rho_{s(\cdot)}(f)<\infty$, there holds
  \begin{align*}
    \lim_{k \to \infty} \tilde\rho_{p_k(\cdot)}(f) = \tilde\rho_{p(\cdot)}(f).
  \end{align*}
\end{lemma}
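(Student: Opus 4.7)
The plan is to apply Lebesgue's dominated convergence theorem to the integrands $g_k(x) \coloneqq \frac{1}{p_k(x)} \abs{f(x)}^{p_k(x)}$ with pointwise candidate limit $g(x) \coloneqq \frac{1}{p(x)} \abs{f(x)}^{p(x)}$. Introducing $\phi_t(a) \coloneqq a^t/t$ (extended by the convention $\phi_\infty(a) = 0$ for $a \in [0,1]$ and $\phi_\infty(a) = \infty$ for $a > 1$), the statement reduces to establishing pointwise a.e. convergence $\phi_{p_k(x)}(\abs{f(x)}) \to \phi_{p(x)}(\abs{f(x)})$ together with an integrable common majorant.

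For the pointwise convergence, whenever $p(x) < \infty$, continuity of $t \mapsto a^t/t$ on $(0,\infty)$ is enough. On the set $\{p = \infty\}$ we must have $s(x) = \infty$, and since $\tilde\rho_{s(\cdot)}(f) < \infty$ forces $\abs{f(x)} \le 1$ a.e.\ on $\{s = \infty\}$ (otherwise the integrand of $\tilde\rho_{s(\cdot)}(f)$ equals $\infty$ on a set of positive measure), we get $\phi_{p_k(x)}(\abs{f(x)}) = \abs{f(x)}^{p_k(x)}/p_k(x) \to 0 = \phi_\infty(\abs{f(x)})$, whether or not $p_k(x)$ already equals $\infty$ for some $k$.

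For the majorant, the essential observation is that $t \mapsto \phi_t(a)$ is convex on $(0,\infty)$ for every fixed $a > 0$: differentiating $\log \phi_t(a) = t \log a - \log t$ twice gives
\[
\phi_t''(a) \,=\, \phi_t(a) \bigl[ (\log a - 1/t)^2 + 1/t^2 \bigr] \,\ge\, 0.
\]
Hence on any interval $[r(x), s(x)]$ with $s(x) < \infty$ the function attains its maximum at an endpoint, so that $\phi_t(\abs{f(x)}) \le \phi_{r(x)}(\abs{f(x)}) + \phi_{s(x)}(\abs{f(x)})$ for all $t \in [r(x), s(x)]$. On $\{s = \infty\}$ the same upper bound holds by a different route: $\phi_t(a)$ is decreasing in $t$ when $a \le 1$ (its logarithmic derivative $\log a - 1/t$ is negative there), and $\phi_{s(x)}(\abs{f(x)}) = 0$ by the convention. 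The common majorant $x \mapsto \phi_{r(x)}(\abs{f(x)}) + \phi_{s(x)}(\abs{f(x)})$ is integrable on $\RRn$ precisely because $\tilde\rho_{r(\cdot)}(f), \tilde\rho_{s(\cdot)}(f) < \infty$.

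With pointwise a.e.\ convergence and an integrable dominant in hand, dominated convergence yields $\int g_k \to \int g$, which is the claim. The only genuinely delicate point is the interaction of the convention $t^\infty$ with the set $\{s(x) = \infty\}$; once one observes that $\tilde\rho_{s(\cdot)}(f) < \infty$ compels $\abs{f} \le 1$ a.e.\ on that set, every ostensibly degenerate case collapses to $0$ and meshes consistently with both the convexity and monotonicity arguments used for the majorant.
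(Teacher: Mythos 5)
Your proof is correct. Note that the paper itself does not prove this lemma at all; it simply cites \cite[Lemma~3.5.5]{DieHarHasRuz11}, so you have supplied the argument that the paper delegates to the literature, and your route (dominated convergence with the majorant $\phi_{r(x)}(|f|)+\phi_{s(x)}(|f|)$ obtained from convexity of $t\mapsto a^t/t$, plus the observation that $\tilde\rho_{s(\cdot)}(f)<\infty$ forces $|f|\le 1$ a.e.\ on $\{s=\infty\}$) is exactly the standard proof of that cited result. You handle the only delicate points correctly: on $\{p=\infty\}$ one indeed has $s=\infty$ and hence $|f|\le 1$ a.e., so the integrands tend to $0$ there, and wherever some $p_k(x)=\infty$ the same observation rules out $|f(x)|>1$ except on a null set, so the domination holds a.e.\ uniformly in $k$; monotonicity in $t$ for $a\le 1$ covers the unbounded intervals $[r(x),\infty)$. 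Nothing is missing.
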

\begin{remark}
  \label{rem:approximation of semi-modular}
  The assumptions $\tilde\rho_{r(\cdot)}(f), \tilde\rho_{s(\cdot)}(f)<\infty$ in Lemma~\ref{lem:approxpxmodular} cannot be removed.
  For the necessity of~$\tilde\rho_{s(\cdot)}(f)<\infty$, use $p_k = \infty \cdot\indicator_{(\frac 1{k+1},\frac 1{k})} + \indicator_{\RR \setminus(\frac 1{k+1},\frac 1{k})}$ and $f=2\cdot \indicator_{(0,1)}$. Then $p = \lim p_k = 1$, $r=\inf_k p_k=1$ and $s= \sup_k p_k= \infty \cdot \indicator_{(0,1)}$. Moreover, $\tilde\rho_{p_k(\cdot)}(f)=\infty$, $\tilde\rho_{r(\cdot)}(f)=2$, $\tilde\rho_{s(\cdot)}(f)=\infty$ but $\tilde\rho_{p(\cdot)}(f) = 2$.

  For the necessity of~$\tilde\rho_{r(\cdot)}(f)<\infty$, use $p_k \coloneqq 2\cdot\indicator_{(-\infty,k)} + \indicator_{[k,\infty)}$ and $f(x) = \indicator_{x\geq 1} \frac 1x$. Then $p = \lim_k p_k = 2$, $r = \inf_k p_k = 1$ and $s=\sup_{p_k} = 2$. Moreover, $\tilde\rho_{p_k}(f) =\infty$ and $\tilde\rho_{r(\cdot)}(f)=\infty$, but $\tilde\rho_{p(\cdot)}(f) =\tilde\rho_{s(\cdot)}(f) = \int_1^\infty \frac{1}{2} \frac{1}{x^2}\,dx = \frac 12$.
\end{remark}
\begin{remark}
  Lemma~\ref{lem:approxpxmodular} depends on the good properties of our semi-modular~$\tilde\rho_{p(\cdot)}$.  The semi-modular $\rho_\px$, $\rhoER_\px$ and $\rhoKR$ only have this property if the set $\set{s=\infty}$ has measure zero, see \cite[Lemma~3.5.5]{DieHarHasRuz11}, so this is an important case for us. But we want to use Lemma~\ref{lem:approxpxmodular} exactly to treat unbounded exponents. The counterexamples for $\rho_\px$, $\rhoER_\px$ and $\rhoKR$ are as follows:
  \begin{enumerate}
  \item Let $p_k = k$, $r=1$, $p=s= \infty$ and $f = \indicator_{(0,1)}$. Then $p_k\to \infty$, $\rho_1(f)=1$, $\rho_{p_k(\cdot)}(f)=1$,  and $\rho_\infty(f)=0$.
  \item Let $p_k =k$, $r=1$, $p=s=\infty$ and $f= 2 \cdot \indicator_{(0,1)}$. Then $p_k \to \infty$, $\rhoER_1(f) = \rhoKR_1(f)=2$, $\rhoER_\infty(f) = \rhoKR_\infty(f)=2$ and $\rhoER_{p_k(\cdot)}(f) = \rhoKR_{p_k(\cdot)}(f) = 2^k \to \infty$. 
  \end{enumerate}
\end{remark}

\section{Muckenhoupt condition for variable exponents}

The Muckenhoupt classes play a fundamental part in the study of weighted Lebesgue spaces. The natural generalization to variable exponent spaces is the following:
\begin{definition}[Muckenhoupt class]
  \label{def:Ap}
  Let $p \in \mathcal{P}(\RRn)$. Then we say that $p \in\mathcal{A}$ if
  \begin{align}
    \label{eq:Aloc}
    [p]_{\mathcal{A}}= 
    \sup_{Q}\frac{\|\indicator_{Q}\|_{p(\cdot)}\|\indicator_{Q}\|_{p'(\cdot)}}{\abs{Q} } < \infty.
  \end{align}
\end{definition}
Note that $[p']_{\mathcal{A}} = [p]_{\mathcal{A}}$. Moreover, $[p]_{\mathcal{A}} \geq \frac 12$ by H\"older's inequality. 
The condition $p\in \mathcal{A}$ is equivalent to the uniform boundedness of the averaging operators $f \mapsto \indicator_Q \dashint f\,dx$, where $Q$ runs over the family of cubes. Indeed, it is shown in \cite[Theorem~4.5.7]{DieHarHasRuz11} that
\begin{align}
  \label{eq:A-vs-TQ}
  \frac 12 [p]_{\mathcal{A}}
  &\leq \sup_{Q} \sup_{f:\norm{f}_{\px}\leq 1} \biggnorm{\indicator_Q \dashint_Q f\,dx}_\px \leq
  2 [p]_{\mathcal{A}}.
\end{align}
The condition~$p \in \mathcal{A}$ appears in \cite{Kop07} as $A_\px$, in \cite[Theorem~4.5.7]{DieHarHasRuz11} as $\mathcal{A}_{\loc}$ and in \cite{Ler10} as $p \in A$.

Additionally, we need the decay condition of Nekvinda~\cite{Nek04}.
\begin{definition}
  We say that $p \in \mathcal{P}(\RRn)$ satisfies the Nekvinda condition $p \in \mathcal{N}$ if there exists $p_\infty \in [1,\infty]$ such that
    $1 \in L^{s(\cdot)}(\mathbb{R}^{n})$ with $
    \tfrac{1}{s(x)} \coloneqq \abs{\tfrac{1}{p(x)} - \tfrac{1}{p_{\infty}}}$.
  We define $[p]_{\mathcal{N} } \coloneqq \norm{1}_{s(\cdot)}$.
\end{definition}
Note that $[p']_{\mathcal{N}} = [p]_{\mathcal{N}}$. Moreover, $[p]_{\mathcal{N}} =\norm{1}_{s(\cdot)} \geq 1$.  This follows from the fact that for all $t>1$, we have $\rho_{s(\cdot)}(t)=\infty$ using $\frac{1}{s(x)}t^{s(x)} \geq e\,\log t$. Also note that $p^- \leq p_\infty \leq p^+$.
Using the above notation we can formulate the embeddings of the sum and the intersection spaces, see \cite[Lemma 3.3.12]{DieHarHasRuz11}:
\begin{lemma}[Lemma 3.3.5 in \cite{DieHarHasRuz11}]
  \label{lem:Nekvinda-minimax}
  Let $p \in \mathcal{N}$ with $p_{\infty}\in [1,\infty]$. Then
  \begin{align*}
    L^{\max\{p(\cdot),\,p_{\infty}\}}(\mathbb{R}^{n}) \hookrightarrow L^{p(\cdot)}(\mathbb{R}^{n}) \hookrightarrow L^{\min\{p(\cdot),\,p_{\infty}\}}(\mathbb{R}^{n})
  \end{align*}
  The embedding constants are at most~$2 [p]_{\mathcal{N}}$. Moreover,
  \begin{align*}
    L^{p(\cdot)}(\mathbb{R}^{n}) \cap L^{p^{+}}(\mathbb{R}^{n}) &\cong L^{p_{\infty}}(\mathbb{R}^{n}) \cap L^{p^{+}}(\mathbb{R}^{n}),
    \\
    L^{p(\cdot)}(\mathbb{R}^{n}) \cap L^\infty(\mathbb{R}^{n}) &\cong L^{p_{\infty}}(\mathbb{R}^{n}) \cap L^\infty(\mathbb{R}^{n}).
  \end{align*}
  The embedding constants are at most~$4[p]_{\mathcal{N}}$.
\end{lemma}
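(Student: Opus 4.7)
The plan is to prove the two embeddings via the generalized H\"older inequality and then to deduce the intersection equivalences by combining them with Lemma~\ref{lem:embedding}. First I decompose $\RRn = E_1 \cup E_2$, where $E_1 \coloneqq \{p(\cdot) \geq p_\infty\}$ and $E_2 \coloneqq \{p(\cdot) < p_\infty\}$. On $E_1$ we have $\min\{p(\cdot), p_\infty\} = p_\infty$, $\max\{p(\cdot), p_\infty\} = p(\cdot)$, and the Nekvinda exponent satisfies $\frac{1}{s(x)} = \frac{1}{p_\infty} - \frac{1}{p(x)}$; on $E_2$ the roles of $\min$ and $\max$ flip and $\frac{1}{s(x)} = \frac{1}{p(x)} - \frac{1}{p_\infty}$.

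For the embedding $L^{p(\cdot)}(\RRn) \hookrightarrow L^{\min\{p(\cdot), p_\infty\}}(\RRn)$ I introduce an auxiliary exponent $t \in \mathcal{P}(\RRn)$ defined by $\frac{1}{\min\{p(x), p_\infty\}} = \frac{1}{p(x)} + \frac{1}{t(x)}$; a case check yields $t = s$ on $E_1$ and $t = \infty$ on $E_2$, with the convention $\frac{1}{\infty} = 0$ handling the sets $\{p = \infty\}$ and $\{p_\infty = \infty\}$ correctly. Applying~\eqref{eq:genhoelder} to the pair $f, 1$ then gives
\begin{align*}
\norm{f}_{\min\{p(\cdot), p_\infty\}} \leq 2\,\norm{f}_{p(\cdot)}\,\norm{1}_{t(\cdot)},
\end{align*}
and a direct computation using $1^\infty = 0$ shows $\norm{1}_{t(\cdot)} \leq \max(1, \norm{1}_{s(\cdot)}) = [p]_{\mathcal{N}}$, since $[p]_{\mathcal{N}} \geq 1$. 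Hence the embedding constant is at most $2[p]_{\mathcal{N}}$. The reverse embedding $L^{\max\{p(\cdot), p_\infty\}}(\RRn) \hookrightarrow L^{p(\cdot)}(\RRn)$ is handled symmetrically with the complementary exponent $u$ satisfying $\frac{1}{p(x)} = \frac{1}{\max\{p(x), p_\infty\}} + \frac{1}{u(x)}$; now $u = \infty$ on $E_1$, $u = s$ on $E_2$, and $\norm{1}_{u(\cdot)} \leq [p]_{\mathcal{N}}$ by the same argument.

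For the intersection equivalences I exploit the relation $p_\infty \in [p^-, p^+]$ recorded just before the statement. Since $p_\infty \leq p^+$, Lemma~\ref{lem:embedding} yields $L^{\min\{p(\cdot), p_\infty\}}(\RRn) \cap L^{p^+}(\RRn) \hookrightarrow L^{p_\infty}(\RRn)$ and $L^{p_\infty}(\RRn) \cap L^{p^+}(\RRn) \hookrightarrow L^{\max\{p(\cdot), p_\infty\}}(\RRn)$, each with constant at most~$2$. Chaining
\begin{align*}
L^{p(\cdot)}(\RRn) \cap L^{p^+}(\RRn) \hookrightarrow L^{\min\{p(\cdot), p_\infty\}}(\RRn) \cap L^{p^+}(\RRn) \hookrightarrow L^{p_\infty}(\RRn) \cap L^{p^+}(\RRn),
\end{align*}
and running the analogous chain through $L^{\max\{p(\cdot), p_\infty\}}(\RRn)$ in the reverse direction, delivers the first equivalence with total constant at most $4[p]_{\mathcal{N}}$. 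Substituting $L^\infty(\RRn)$ for $L^{p^+}(\RRn)$ throughout (legitimate because $p_\infty \leq p^+ \leq \infty$) yields the second equivalence.

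The main obstacle I anticipate is the careful bookkeeping for the degenerate values $p(x) = \infty$ and $p_\infty = \infty$. The auxiliary exponents $t$ and $u$ must be identified consistently on $\{p = \infty\}$ and $\{p_\infty = \infty\}$, and the norm $\norm{1}_{t(\cdot)}$ cannot be naively replaced by $\norm{\indicator_{E_1}}_{s(\cdot)}$: the convention $1^\infty = 0$ makes the $E_2$-contribution to the modular $\rho_{t(\cdot)}(1/\lambda)$ vanish for $\lambda \geq 1$ but explode for $\lambda < 1$, which pushes $\norm{1}_{t(\cdot)}$ up to at least~$1$ whenever $\abs{E_2} > 0$. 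Because $[p]_{\mathcal{N}} \geq 1$, this inflation is absorbed into the stated constants and the argument closes.
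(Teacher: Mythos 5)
Your proof is correct and takes essentially the same route that the paper delegates to the reference: Hölder's inequality with the Nekvinda auxiliary exponent for the two embeddings, followed by chaining through Lemma~\ref{lem:embedding} for the intersection equivalences (the paper itself says the second part ``is basically a consequence of the first part and Lemma~\ref{lem:embedding}''). Your bookkeeping for the exponent $t$ (equal to $s$ on $\{p\geq p_\infty\}$, equal to $\infty$ on the complement) and for the bound $\norm{1}_{t(\cdot)}\leq[p]_{\mathcal{N}}$ via $s\leq t$ and $[p]_{\mathcal{N}}\geq 1$ is accurate, and the constant count $2[p]_{\mathcal{N}}\cdot 2 = 4[p]_{\mathcal{N}}$ matches the statement.
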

\begin{proof}
  The first part follows from \cite[Lemma~3.3.5]{DieHarHasRuz11} and its proof. The second part follows from \cite[Lemma~3.3.12]{DieHarHasRuz11} and its proof. It is basically a consequence of the first part and Lemma~\ref{lem:embedding}. The third part is a trivial modification.
\end{proof}

In the proof of our main result Theorem~\ref{thm:main}, we want to include unbounded exponents. However, for the proof it is very convenient to work with bounded exponents. This simplifies the notation significantly. Therefore, it is very useful to approximate the exponent $p\in \mathcal{A} \cap \mathcal{N}$ by $p_k \in \mathcal{A} \cap \mathcal{N}$ with $1<p_k^-\leq p_k^+ < \infty$ in such a way that we can later pass back to~$p$. In particular, the approximation of~$p_k$ should respect the $\mathcal{A}$ and $\mathcal{N}$ constants. This is the purpose of the following lemma.
\begin{lemma}[Approximation of variable exponents]
  \label{lem:pkbetter}
  For $p \in \mathcal{P}(\RRn)$ define $p_k \in \mathcal{P}(\RRn)$ with $k \in \setN$ by
  \begin{align}
    \label{eq:def-pk}
    \frac{1}{p_k(x)} &\coloneqq \frac{1}{k+1} + \frac{k-1}{k+1} \frac{1}{p(x)}.
  \end{align}
  Then
  \begin{enumerate}
  \item \label{itm:pkbetter1} $\frac{1}{p_k} \to \frac 1p$ uniformly on~$\RRn$.
  \item \label{itm:pkbetter2} $p_k \to p$ almost everywhere.
  \item \label{itm:pkbetter3} $1+\frac 1k \leq p_k^- \leq p_k^+  \leq k+1$ and $\min \set{p^-,2} \leq p_k^- \leq p_k^+ \leq \max \set{p^+,2}$.
  \item \label{itm:pkbetter4} If $ p \in \mathcal{A}$, then $p_k \in \mathcal{A}$ and $[p_k]_{\mathcal{A}} \leq 8\, [p]_{\mathcal{A}}$.
  \item \label{itm:pkbetter5} If $ p \in \mathcal{N}$, then $p_k \in \mathcal{N}$ and $[p_k]_{\mathcal{N}} \leq 2[p]_{\mathcal{N}}$.
  \item \label{itm:pkbetter6} If $f \in L^2(\RRn)$, then
    \begin{align*}
      \lim_{k \to \infty} \tilde\rho_{p_k(\cdot)}(f) &= \tilde\rho_{p(\cdot)}(f) \qquad \text{and} \qquad 
      \lim_{k \to \infty} \norm{f}_{\tilde{\rho}_{p_k(\cdot)}} =\norm{f}_{\tilde{\rho}_{p(\cdot)}}.
    \end{align*}
  \end{enumerate}
\end{lemma}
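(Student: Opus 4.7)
The plan is to verify the six items in turn, exploiting the fact that $\tfrac{1}{p_k}$ is the convex combination $\theta_k\cdot\tfrac{1}{2} + (1-\theta_k)\cdot\tfrac{1}{p}$ with $\theta_k = \tfrac{2}{k+1}$, so that $p_k$ interpolates between $p$ and the constant exponent~$2$.

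For (1), I would compute directly
\begin{equation*}
  \tfrac{1}{p_k(x)} - \tfrac{1}{p(x)} \;=\; \tfrac{1}{k+1}\Bigl(1 - \tfrac{2}{p(x)}\Bigr),
\end{equation*}
whose absolute value is at most $\tfrac{1}{k+1}$, giving uniform convergence. Item~(2) is then the continuity of $t \mapsto 1/t$, and item~(3) follows from $\tfrac{1}{p} \in [0,1]$ for the first pair of bounds and from $\tfrac{1}{p_k}$ lying between $\tfrac{1}{p}$ and $\tfrac{1}{2}$ for the second.

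For (4), the main observation is that the recipe commutes with duality: a direct computation of $1 - \tfrac{1}{p_k}$ yields $\tfrac{1}{p'_k} = \tfrac{1}{k+1} + \tfrac{k-1}{k+1}\tfrac{1}{p'}$. Applying the interpolation estimate~\eqref{eq:interpolation} to $\indicator_Q$ once with $p_k$ and once with $p'_k$, each time with $L^2$ as the second endpoint, and multiplying the resulting two bounds gives
\begin{equation*}
  \tfrac{1}{|Q|}\,\|\indicator_Q\|_{p_k(\cdot)}\,\|\indicator_Q\|_{p'_k(\cdot)}
  \;\leq\; 4\,\Bigl(\tfrac{\|\indicator_Q\|_{p(\cdot)}\,\|\indicator_Q\|_{p'(\cdot)}}{|Q|}\Bigr)^{1-\theta_k}
  \;\leq\; 4\,[p]_{\mathcal{A}}^{1-\theta_k},
\end{equation*}
and $[p_k]_{\mathcal{A}} \leq 8[p]_{\mathcal{A}}$ follows using $[p]_{\mathcal{A}} \geq \tfrac12$. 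For (5), I would set $\tfrac{1}{(p_k)_\infty} = \tfrac{1}{k+1} + \tfrac{k-1}{k+1}\tfrac{1}{p_\infty}$; the corresponding Nekvinda exponent satisfies $\tfrac{1}{s_k} = \tfrac{k-1}{k+1}\tfrac{1}{s}$, so $s_k \geq s$ pointwise for $k \geq 2$ (while $k=1$ gives the constant exponent $p_1 \equiv 2$, a trivial case). Since $\lambda^{-s_k(x)} \leq \lambda^{-s(x)}$ for $\lambda > 1$, the modular inequality yields $[p_k]_{\mathcal{N}} = \|1\|_{s_k(\cdot)} \leq \|1\|_{s(\cdot)} = [p]_{\mathcal{N}}$.

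The main obstacle will be (6), which I would reduce to Lemma~\ref{lem:approxpxmodular}. For the modular convergence I split into two cases. If $\tilde\rho_{p(\cdot)}(f) = \infty$, Lemma~\ref{lem:lsc} already gives $\liminf_k \tilde\rho_{p_k(\cdot)}(f) \geq \tilde\rho_{p(\cdot)}(f) = \infty$ and the claim is clear. Otherwise I would apply Lemma~\ref{lem:approxpxmodular} with $r(\cdot) = \min(p(\cdot), 2)$ and $s(\cdot) = \max(p(\cdot), 2)$: the sandwich $r \leq p_k \leq s$ is supplied by~(3), and the required finiteness of $\tilde\rho_{r(\cdot)}(f)$ and $\tilde\rho_{s(\cdot)}(f)$ follows by splitting each integral at $\{p \leq 2\}$ versus $\{p > 2\}$, since on one side the integrand coincides with that of $\tilde\rho_{p(\cdot)}(f)$ (finite by assumption) and on the other it is pointwise at most $\tfrac12 |f(x)|^2$, contributing at most $\tfrac12 \|f\|_2^2$. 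The norm statement then follows by the standard two-sided argument: the lower bound $\|f\|_{\tilde\rho_{p(\cdot)}} \leq \liminf_k \|f\|_{\tilde\rho_{p_k(\cdot)}}$ is provided by Lemma~\ref{lem:lsc}, and the matching upper bound comes from applying the just-proved modular convergence to $f/\lambda$ for $\lambda$ slightly above $\|f\|_{\tilde\rho_{p(\cdot)}}$, combined with the unit ball property for $\tilde\rho$.
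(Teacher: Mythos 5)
Your proposal is correct and follows essentially the same route as the paper: items (1)--(3) are read off from the definition, (4) uses the commutativity of~\eqref{eq:def-pk} with duality together with the interpolation estimate~\eqref{eq:interpolation}, and (6) splits into the two directions via Lemma~\ref{lem:lsc} and Lemma~\ref{lem:approxpxmodular} with the sandwich exponents $\min\{p,2\}$ and $\max\{p,2\}$. Your calculation in~(4) stops at the endpoint $L^2$ and uses $\|\indicator_Q\|_2^{2}=|Q|$ directly, whereas the paper factors further into $L^1$ and $L^\infty$, but the two computations coincide. The one place you genuinely deviate is~(5): instead of invoking Lemma~\ref{lem:embedding} for the embedding $L^{s(\cdot)}\cap L^\infty\hookrightarrow L^{s_k(\cdot)}$ (giving the factor~$2$), you observe the exact scaling $\tfrac{1}{s_k}=\tfrac{k-1}{k+1}\tfrac{1}{s}$ and compare modulars pointwise for $\lambda>1$, using $\|1\|_{s(\cdot)}\geq 1$ to restrict to that range; this is a clean shortcut and in fact yields the sharper constant $[p_k]_{\mathcal{N}}\leq[p]_{\mathcal{N}}$. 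One small point of exposition in~(6): to conclude $\tilde\rho_{p_k(\cdot)}(f/\lambda)\leq 1$ for large~$k$ from the modular convergence you need the strict inequality $\tilde\rho_{p(\cdot)}(f/\lambda)<1$ for $\lambda>\|f\|_{\tilde\rho_{p(\cdot)}}$, which requires the convexity of $\tilde\rho_{p(\cdot)}$ (writing $f/\lambda=\mu\,f/\|f\|_{\tilde\rho_{p(\cdot)}}$ with $\mu<1$ and using $\tilde\rho(\mu g)\leq\mu\,\tilde\rho(g)$), not the unit ball property alone; the paper makes this explicit, and you should too.
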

\begin{proof}
  The statements~\ref{itm:pkbetter1}--\ref{itm:pkbetter3} follow directly by the definition of the~$p_k$. We continue with~\ref{itm:pkbetter4}. The exponent is defined as a suitable interpolation between~$L^1$, $L^{\px}$ and $L^\infty$, namely, $L^{p_k(\cdot)} = [[L^1,L^\infty]_{\frac 12}, L^\px]_{\frac{k-1}{k+1}}$.  Thus, it follows by the interpolation estimate~\eqref{eq:interpolation}, the classical interpolation estimate for constant exponents, and~\eqref{eq:multiplepower} that
  \begin{align*}
    \norm{\indicator_Q}_{p_k(\cdot)}
    &\leq 2\,\norm{\indicator_Q}_2^{\frac{2}{k+1}}
    \norm{\indicator_Q}_{\px}^{\frac{k-1}{k+1}}
    \\
    &\leq
    2\,
    \bignorm{\indicator_Q}_{1}^{\frac 1{k+1}}
    \bignorm{\indicator_Q}_{\px}^{\frac{k-1}{k+1}}
    \bignorm{\indicator_Q}_{\infty}^{\frac{1}{k+1}}
    \\
    &= 2\,
    \abs{Q}^{\frac 1{k+1}}
    \norm{\indicator_Q}_{\px}^{\frac{k-1}{k+1}}.
  \end{align*}
  Note that $p_k$ is defined such that it commutes with duality, i.e.
  \begin{align}
    \label{eq:pkdual}
    \frac{1}{p_k'(x)} &\coloneqq \frac{1}{k+1} + \frac{k-1}{k+1} \frac{1}{p'(x)}.
  \end{align}
  Thus, by the calculations above we also obtain
  \begin{align*}
    \norm{\indicator_Q}_{p_k'(\cdot)} \leq 2 \abs{Q}^{\frac 1{k+1}} \norm{\indicator_Q}_{\pdx}^{\frac{k-1}{k+1}}.
  \end{align*}
  Combining both estimates we obtain
  \begin{align*}
    \frac{     \norm{\indicator_Q}_{p_k(\cdot)}     \norm{\indicator_Q}_{p_k'(\cdot)}
    }{\abs{Q}}
    &\leq 4 \bigg(\frac{ 
      \norm{\indicator_Q}_{\px}
      \norm{\indicator_Q}_{\pdx}}{\abs{Q}} \bigg)^{\frac{k-1}{k+1}}
    \leq 4\, [p]_{\mathcal{A}}^{\frac{k-1}{k+1}}
    \leq 8\, [p]_{\mathcal{A}}
  \end{align*}
  using in the last step that $[p]_{\mathcal{A}} \geq \frac 12$. Taking the supremum over all cubes~$Q$ proves~\ref{itm:pkbetter4}.

  Suppose now that $p \in \mathcal{N}$. Since $t \mapsto \frac{1}{k+1} + \frac{k-1}{k+1} t$ is a contraction with fixed point~$\frac 12$, we have
  \begin{align*}
    \tfrac 1{s_k(x)} \coloneqq \bigabs{\tfrac 1{p_k(x)} - \tfrac 1{(p_k)_\infty}} \leq 
    \bigabs{\tfrac 1{p(x)} - \tfrac 1{p_\infty}} \eqqcolon \tfrac 1{s(x)},
  \end{align*}
  where $\frac{1}{(p_k)_\infty} = \frac{1}{k+1} + \frac{k-1}{k+1}\frac{1}{p_\infty}$.  
  Thus, $s \leq s_k \leq \infty$ and by Lemma~\ref{lem:embedding} we get
  \begin{align*}
    \norm{1}_{s_k(\cdot)} &\leq 2\, \max \bigset{\norm{1}_{s(\cdot)}, \norm{1}_{\infty}} \leq 2 \norm{1}_{s(\cdot)},
  \end{align*}
  where we used $\norm{1}_{s(\cdot)}= [s]_{\mathcal{N}} \geq 1 = \norm{1}_\infty$.
  This proves~\ref{itm:pkbetter5}.

  It remains to prove~\ref{itm:pkbetter6}. For this let $f \in L^2(\RRn)$. Then $\tilde\rho_2(f) = \frac 12 \norm{f}_2^2<\infty$. It follows directly from \cite[Corollary~3.5.4]{DieHarHasRuz11} that
  \begin{align*}
    \tilde\rho_{p(\cdot)}(f)&\leq \liminf_{k \to \infty} \tilde\rho_{p_k(\cdot)}(f) \qquad \text{and} \qquad 
     \norm{f}_{\tilde{\rho}_{p(\cdot)}} \leq \liminf_{k \to \infty} \norm{f}_{\tilde{\rho}_{p_k(\cdot)}} .
  \end{align*}
  It remains to prove
  \begin{align}
    \label{eq:pxbetter-aux1}
    \limsup_{k \to \infty} \tilde\rho_{p_k(\cdot)}(f) &\leq     \tilde\rho_{p(\cdot)}(f),
    \\
    \label{eq:pxbetter-aux2}
    \limsup_{k \to \infty} \norm{f}_{\tilde{\rho}_{p_k(\cdot)}}&\leq \norm{f}_{\tilde{\rho}_{p(\cdot)}}.
  \end{align}
  We start with the proof of~\eqref{eq:pxbetter-aux1}. For this we can assume that~$\tilde\rho_\px(f)< \infty$.  Let us define $q,r \in \mathcal{P}(\RRn)$ by $q \coloneqq \min \set{p,2}$ and $r \coloneqq \max \set{p,2}$. It follows from $\abs{\frac{1}{p_k(x)} - \frac 12}= \frac{k-1}{k+1} \abs{\frac{1}{p(x)}-\frac 12}$ that $q \leq p_k \leq r$. Moreover,
  \begin{align*}
    \tilde\rho_{q(\cdot)}(f)
    &=
    \tilde\rho_{2}(\indicatorset{p\geq 2}f) +
    \tilde\rho_{p(\cdot)}(\indicatorset{p< 2}f)
    \leq \tilde\rho_2(f) + \tilde\rho_{\px}(f) < \infty,
    \\
    \tilde\rho_{r(\cdot)}(f)
    &=
    \tilde\rho_2(\indicatorset{p\leq 2}f) +
    \tilde\rho_{p(\cdot)}(\indicatorset{p> 2}f)
    \leq \tilde\rho_2(f) + \tilde\rho_{\px}(f) < \infty.
  \end{align*}
  Thus,~\eqref{eq:pxbetter-aux1} follows by use of Lemma~\ref{lem:approxpxmodular}. This proves~\eqref{eq:pxbetter-aux1}.

  It remains to prove~\eqref{eq:pxbetter-aux2}. Let $0<\lambda<1$, then
  \begin{align*}
    \tilde\rho_{q(\cdot)}\bigg(\frac{\lambda f}{\norm{f}_{\tilde{\rho}_{p(\cdot)}}}\bigg)
    &=
    \tilde\rho_2\bigg(\frac{\indicatorset{p\geq 2} \lambda f}{\norm{f}_{\tilde{\rho}_{p(\cdot)}}}\bigg) +
    \tilde\rho_\px\bigg(\frac{\indicatorset{p> 2}\lambda f}{\norm{f}_{\tilde{\rho}_{p(\cdot)}}}\bigg)
    \leq\frac{\lambda^2\tilde\rho_2(f)}{\norm{f}_{\tilde{\rho}_{p(\cdot)}}^2} + 1 < \infty,
    \\
    \tilde\rho_{r(\cdot)}\bigg(\frac{\lambda f}{\norm{f}_{\tilde{\rho}_{p(\cdot)}}}\bigg)
    &=
    \tilde\rho_2\bigg(\frac{\indicatorset{p\leq 2} \lambda f}{\norm{f}_{\tilde{\rho}_{p(\cdot)}}}\bigg) +
    \tilde\rho_\px\bigg(\frac{\indicatorset{p< 2}\lambda f}{\norm{f}_{\tilde{\rho}_{p(\cdot)}}}\bigg)
    \leq\frac{\lambda^2\tilde\rho_2(f)}{\norm{f}_{\tilde{\rho}_{p(\cdot)}}^2} + 1 < \infty.
  \end{align*}
  Thus, it follows by Lemma~\ref{lem:approxpxmodular} that
  \begin{align*}
    \limsup_{k \to \infty} \tilde\rho_{p_k(\cdot)} \bigg( \frac{\lambda f}{\norm{f}_{\tilde{\rho}_{p_k(\cdot)}}}\bigg) = \tilde\rho_{p(\cdot)} \bigg( \frac{\lambda f}{\norm{f}_{\tilde{\rho}_{p(\cdot)}}}\bigg) \leq \lambda \tilde\rho_{p(\cdot)} \bigg( \frac{f}{\norm{f}_{\tilde{\rho}_{p(\cdot)}}}\bigg) \leq \lambda < 1.
  \end{align*}
  Thus $\tilde\rho_{p_k(\cdot)}( \frac{\lambda f}{\norm{f}_{\tilde{\rho}_{\px}}}) \leq 1$ for large~$k$, which implies $\lambda \norm{f}_{\tilde{\rho}{p_k(\cdot)}} \leq \norm{f}_{\tilde{\rho}_{\px}}$ for large~$k$. Since $\lambda \in (0,1)$ was arbitrary, this proves~\eqref{eq:pxbetter-aux2}. This concludes the proof of the lemma.
\end{proof}

\section{Proof of the main result}
\label{sec:proof-main-result}

In this section we prove our main result Theorem~\ref{thm:main}. We begin in Section~\ref{sec:using-local-a_infty} first with an auxiliary result that shows that $t^{p'(\cdot)}$ is locally an~$\mathcal{A}_\infty$-weight. This is then used in Section~\ref{sec:proof-main-theorem} to prove our main result by means of \Calderon{}-Zygmund argument. The proof is based on the ideas of~\cite{Ler10} and~\cite[Theorem~4.52]{CruFio13}. However, we adapted the proof so that the constants no longer depend on~$p^+$, which allows us to treat the general case.

\subsection{\texorpdfstring{Using local $\mathcal{A}_\infty$-weights}{Using local A-infinity-weights}}
\label{sec:using-local-a_infty}

Let $Q\subset \RRn$ be a cube. Then by $\abs{Q}$ we denote its Lebesgue measure.  For $f \in L^1(Q)$ we define $|f|_Q \coloneqq \dashint_Q \abs{f}\,dx = \frac{1}{\abs{Q}} \int_Q \abs{f}\,dx$. A weight~$w$ is a non-negative function on~$\RRn$ or~$Q$, which is locally integrable. We define $w(E) \coloneqq \int_E w(x)\,dx$ for any measurable set~$E$. By $B_r(x)$ we denote the open ball in~$\RRn$ with radius~$r$ and center~$x$. For a measurable set~$E$ we denote by~$\indicator_E$ its indicator function.

In the following we need the notion of (local) $\mathcal{A}_\infty$-weights.

\begin{definition}
  \label{def:Ainftyloc}
  Let~$w$ be a weight on a cube~$Q \subset \RRn$. We say that~$w \in \mathcal{A}_\infty(Q)$ if there exists $\alpha,\beta\in(0,1)$ such that for any cube $Q'\subset Q$ and for any measurable subset $E\subset Q'$ the following holds
  \begin{align}
    \label{eq:weight-alpha-beta}
    |E|\geq\alpha|Q'| \quad\text{implies}\quad w(E)\geq\beta w(Q').
  \end{align}
\end{definition}
It is well known fact  that the class $A_{\infty}$ can be defined in many equivalent ways. 
In particular, $w\in A_{\infty}(Q)$ if and only if  there
exist constants $c_0,\varepsilon>0$ such that for any cube $Q'\subset Q$ and for any measurable subset $E\subset Q'$,
\begin{equation}\label{eq:weight-c-epsilon}
  \frac{w(E)}{w(Q')}\leq c_0\left(\frac{|E|}{|Q'|}\right)^{\varepsilon}.
\end{equation}
The constants in \eqref{eq:weight-c-epsilon} depend only on the dimension $n$ and the constants in \eqref{eq:weight-alpha-beta}, see \cite[Theorem 7.3.3]{Grafakos}. These results are stated for ~$\RRn$ but transfer immediately to those on~$Q$.

The following result is a refinement of~\cite[Lemma~3.1]{Ler10}, where we carefully keep track of the dependence of the constants. In order to pass later to the general case with $p^-=1$ or $p^+=\infty$, it is essential that the constants in Lemma~\ref{lem:avgmodular} are independent of~$p^-$ and $p^+$.
\begin{lemma}
  \label{lem:avgmodular}
  Let $p\in \mathcal{A}$ with $1<p^-\leq p^+<\infty$ and
    $\lambda \coloneqq \min \set{\tfrac 12, \tfrac{1}{2[p]_{\mathcal{A}}}}$.
  Then for every cube $Q$, every $f \in L^{\px}(Q)$ with $|f|_{Q}\geq 1$ and $\|f\|_{p(\cdot)}\leq \frac{1}{2}$ we have
  \begin{align*}
    \int_{Q}  (\lambda |f|_{Q})^{p(x)}dx &\leq \int_{Q}|2f(x)|^{p(x)}\,dx
  \end{align*}
\end{lemma}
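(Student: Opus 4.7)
The plan is first to use the Muckenhoupt condition together with H\"older's inequality to control the $L^{\px}$-norm of the constant function $\lambda|f|_Q\indicator_Q$, and then to pass to the desired modular inequality via a decomposition of $Q$ into a ``good'' set where the pointwise bound is immediate and a ``bad'' set on which one must argue indirectly. To set this up, write $F \coloneqq |f|_Q \geq 1$. By the generalized H\"older inequality~\eqref{eq:genhoelder} and the hypothesis $\|f\|_{\px}\leq \tfrac12$,
\[
  F|Q| = \int_Q|f|\,dx \leq 2\|f\|_{\px}\|\indicator_Q\|_{\pdx} \leq \|\indicator_Q\|_{\pdx}.
\]
Combining with the $\mathcal{A}$-bound $\|\indicator_Q\|_{\px}\|\indicator_Q\|_{\pdx}\leq [p]_{\mathcal{A}}|Q|$ yields $F\|\indicator_Q\|_{\px}\leq [p]_{\mathcal{A}}$, and hence $\|\lambda F\indicator_Q\|_{\px}\leq \lambda[p]_{\mathcal{A}}\leq \tfrac12$.

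Next I would decompose $Q = G\cup B$ with $G\coloneqq\{x\in Q: 2|f(x)|\geq \lambda F\}$ and $B\coloneqq Q\setminus G$. On $G$ the pointwise bound $(\lambda F)^{p(x)}\leq (2|f(x)|)^{p(x)}$ is immediate. On $B$ we have $|f|<\lambda F/2$, so $\int_B|f|\,dx\leq (\lambda F/2)|B|\leq (\lambda/2)F|Q|$, and therefore $\int_G|f|\,dx\geq (1-\lambda/2)F|Q|\geq \tfrac34 F|Q|$: the $L^1$-mass of $f$ over $Q$ is concentrated on $G$. In the easy case $\lambda F\leq 1$ the claim now follows directly: since $p(x)\geq 1$ we have $(\lambda F)^{p(x)}\leq \lambda F$ on $Q$, so $\int_Q(\lambda F)^{p(x)}\,dx\leq \lambda F|Q|$, while on $\{|f|\geq \tfrac12\}$ the bound $(2|f|)^{p(x)}\geq 2|f|$ combined with the concentration estimate gives $\int_Q(2|f|)^{p(x)}\,dx\geq F|Q|$, more than enough since $\lambda\leq \tfrac12$.

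The main obstacle is the complementary case $\lambda F>1$, in which $(\lambda F)^{p(x)}$ can be very large on $B$ precisely where $p(x)$ is large, so the deficit on $B$ is not absorbed by the pointwise bound alone. Here the plan is to exploit the averaging-operator characterization~\eqref{eq:A-vs-TQ} of the $\mathcal{A}$-condition applied to $2|f|\indicator_G$: this bound, together with the concentration estimate for $|f|$ on $G$, is designed to produce a lower bound on the modular $\int_G(2|f|)^{p(x)}\,dx$ large enough to absorb $\int_B(\lambda F)^{p(x)}\,dx$. Throughout one has to interchange modulars and norms via the trick~\eqref{eq:trick}, while keeping every constant depending only on $[p]_{\mathcal{A}}$ and never on $p^-$ or $p^+$. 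It is this quantitative control that constitutes the essential refinement over~\cite[Lemma~3.1]{Ler10} and makes the lemma usable later for unbounded exponents, which is why the hypothesis is imposed in the form $|f|_Q\geq 1$ and $\|f\|_{\px}\leq \tfrac12$ rather than the more symmetric normalization from Lerner's paper.
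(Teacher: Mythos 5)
Your reduction to the two cases $\lambda F\le 1$ and $\lambda F>1$ is sound, and the easy case is handled correctly: the pointwise bound $(\lambda F)^{p(x)}\le\lambda F$ gives $\int_Q(\lambda F)^{p(x)}dx\le \lambda F|Q|$, while $\int_{\{|f|\ge 1/2\}}(2|f|)^{p(x)}dx\ge 2\int_Q|f|\,dx-2\int_{\{|f|<1/2\}}|f|\,dx\ge 2F|Q|-|Q|\ge F|Q|$ since $F\ge1$. The opening norm estimate $\|\lambda F\indicator_Q\|_{\px}\le\tfrac12$ and the concentration of mass on $G$ are also correct.

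The problem is that everything that follows --- which is the entire substance of the lemma --- remains a ``plan'' rather than a proof. You correctly identify $\lambda F>1$ as the main obstacle, but you do not actually carry out the absorption of $\int_B(\lambda F)^{p(x)}dx$ into $\int_G(2|f|)^{p(x)}dx$; you say the averaging-operator characterization~\eqref{eq:A-vs-TQ} ``is designed to produce a lower bound on the modular'', but~\eqref{eq:A-vs-TQ} is an \emph{upper} bound on $\|\indicator_Q\dashint_Q h\,dx\|_{\px}$, and I do not see any mechanism by which it converts an $L^1$ concentration estimate $\int_G|f|\,dx\ge\tfrac34F|Q|$ into a lower bound on the modular $\int_G(2|f|)^{p(x)}dx$. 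Note that $\|2f\|_{\px}\le1$ forces $\int_Q(2|f|)^{p(x)}dx\le1$, so the needed lower bound is precisely the delicate quantity and cannot follow from generalities. The bad-set/good-set split also throws away the structure that makes the lemma work: on $B$ the exponent $p(x)$ can be arbitrarily large exactly where $(\lambda F)^{p(x)}$ is huge, and nothing in the proposal controls this.

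The paper's proof uses a different and essentially global mechanism, with no good/bad decomposition. It introduces an auxiliary scalar $\alpha\ge1$ defined implicitly by $\dashint_Q\alpha^{p'(y)-1}dy=|f|_Q$ and shows from the $\mathcal{A}$-condition that $\alpha\le1/\|\indicator_Q\|_{p'(\cdot)}$. It then substitutes this representation of $|f|_Q$ into $(\lambda|f|_Q)^{p(x)}$, expands, and bounds the resulting double integral $\dashint_Q\dashint_Q\alpha^{p'(y)-p'(x)}dy\,(\cdots)\,dx$ by splitting $y$ into $\{p'(y)>p'(x)\}$ and its complement --- a split in the \emph{dual} exponent, not in $f$ --- and invoking $p\in\mathcal{A}$ once more. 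The final step, which replaces your missing absorption, is an integrated Young's inequality: since $\int_Q\alpha^{p'(y)}dy=\alpha\int_Q|f|\,dy$ by construction, one has $\int_Q\alpha^{p'(y)}dy=2\alpha\int_Q|f|-\int_Q\alpha^{p'(y)}dy\le\int_Q|2f|^{p(y)}dy$. This is the step that supplies the lower bound on $\int_Q|2f|^{p(x)}dx$ that your approach lacks, and I see no way to recover it from the bad-set argument you sketch. As it stands the proposal proves only the trivial regime $\lambda F\le1$.
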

\begin{proof}
  By H\"{o}lder's inequality we have
  \begin{align}\label{eq:3.1}
    \dashint_{Q}|f(x)|dx \leq \frac{2\|f\|_{p(\cdot)}\|\indicator_{Q}\|_{p'(\cdot)}}{\abs{Q}} \leq \frac{\|\indicator_Q\|_{p'(\cdot)}}{\abs{Q}}.
  \end{align}
  By continuity we find $\alpha>0$ such that
  \begin{align}\label{eq:3.2}
    \dashint_{Q}\alpha^{p'(y)-1}dy=\dashint_{Q}|f(x)|dx \leq  \frac{\|\indicator_Q\|_{p'(\cdot)}}{\abs{Q}}.
  \end{align}
  Using $(p')^+<\infty$ and the unit ball property we obtain
  \begin{align}\label{3.3}
    \dashint_{Q}\bigg(\frac{1}{\|\indicator_{Q}\|_{p'(\cdot)}} \bigg)^{p'(y)-1} \!\!\!dy
    =   \norm{\indicator_Q}_\px  \dashint_{Q}\left(\frac{1}{\|\indicator_Q\|_{p'(\cdot)}} \right)^{p'(y)}dy = \frac{\|\indicator_Q\|_{p'(\cdot)}}{\abs{Q}}.
  \end{align}
  Thus, by strict monotonicity of~\eqref{eq:3.2} in~$\alpha$ we obtain $\alpha\leq \frac{1}{\|\indicator_Q\|_{p'(\cdot)}}$. Since $|f|_Q \geq 1$, we obtain also by the  equality in~\eqref{eq:3.2} that $\alpha\geq 1$. Overall, we have
  \begin{align}\label{eq:3.4}
    1 \leq \alpha\leq \frac{1}{\|\indicator_Q\|_{p'(\cdot)}}.
  \end{align}
  We calculate
  \begin{align}\label{eq:3.5}
    \begin{aligned}
      \int_{Q}(\lambda |f|_{Q})^{p(x)}dx &=\int_{Q}\left( \lambda \dashint_{Q}\alpha^{p'(y)-1}dy\right)^{p(x)}dx
      \\
      &=\int_{Q}\left( \lambda \dashint_{Q}\alpha^{p'(y)-p'(x)}dy\right)^{p(x)} \alpha^{p'(x)}\,dx
      \\
      &= \lambda\,\left(\frac{1}{|Q|}\int_{Q}\left( \lambda \dashint_{Q}\alpha^{p'(y)-p'(x)}dy\right)^{p(x)-1}dx\right)\int_{Q}\alpha^{p'(y)}dy.
    \end{aligned}
  \end{align}
  For each $x\in Q$ we consider the partition of  $Q$ into
  $E_{1}(x)=\{y\in Q:\, p'(y)>p'(x)\}$ and $E_{2}(x)=Q\backslash E_{1}(x)$. Using \eqref{eq:3.4} and the fact that  $\alpha\geq1$, we obtain
  \begin{align*}
    \dashint_{Q}\alpha^{p'(y)-p'(x)}dy
    &=\frac{1}{\abs{Q}}\left(\int_{E_{1}(x)} \alpha^{p'(y)-p'(x)}dy+\int_{E_{2}(x)}\alpha^{p'(y)-p'(x)}dy\right)
    \\
    &\leq \frac{\| \indicator_Q\|^{p'(x)}_{p'(\cdot)}}{\abs{Q}} \int_{E_{1}(x)}\left[ \frac{1}{\| \indicator_Q\|_{p'(\cdot)}} \right] ^{p'(y)}dy+ 1
    \\
    &\leq \frac{\|\indicator_Q\|_{p'(\cdot)}^{p'(x)}}{\abs{Q}}+1.
  \end{align*}
  Hence, we obtain 
  \begin{align*}
    \lefteqn{\textrm{I} \coloneqq \frac{1}{|Q|}\int_{Q}\left(\lambda\dashint_{Q}\alpha^{p'(y)-p'(x)}dy\right)^{p(x)-1}dx}
    \qquad &
    \\
    &\leq\frac{1}{|Q|}\int_{Q}\lambda^{p(x)-1} \left(\frac{\|\indicator_Q\|^{p'(x)}_{p'(\cdot)}}{\abs{Q}}+1 \right)^{p(x)-1}dx
    \\
    &\leq \frac{1}{|Q|} \int_{Q}\lambda^{p(x)-1} 2^{p(x)-1} \left(\left[\frac{\|\indicator_Q\|^{p'(x)}_{p'(\cdot)}}{\abs{Q}}\right]^{p(x)-1}+1\right)dx
    \\
    &\leq  \frac{1}{2\lambda} \int_{Q}(2\lambda)^{p(x)}\left[\frac{\|\indicator_Q\|_{p'(\cdot)}}{\abs{Q}}\right]^{p(x)}dx+1.
  \end{align*}
  Due to the fact that $p\in \mathcal{A} $ and $2 \lambda [p]_{\mathcal{A}} \leq 1$, we have
  \begin{align}
    \label{eq:aux-I}
    \begin{aligned}
      \textrm{I} &\leq \frac{1}{2\lambda} \int_{Q}(2\lambda [p ]_{\mathcal{A}})^{p(x)} \frac{1}{\norm{\indicator_Q}^{p(x)}_{\px}} dx+1
      \leq \frac{1}{2\lambda} \int_Q \frac{1}{\norm{\indicator_Q}^{p(x)}_{\px}} dx+1
      \leq \frac{1}{2\lambda}+1
    \end{aligned}
  \end{align}
  Also we have the following estimate.
  \begin{align}\label{eq:3.7}
    \begin{aligned}
      \int_{Q}\alpha^{p'(y)}\,dy &=2\alpha\int_{Q}|f(y)|\,dy- \int_{Q}\alpha^{p'(y)}\,dy
      \\
      &\leq \int_{Q}\alpha^{p'(y)}\,dy+\int_{Q}|2f(y)|^{p(y)}\,dy-\int_{Q}\alpha^{p'(y)}\,dy
      \\
      & = \int_{Q}|2f(y)|^{p(y)}\,dy
    \end{aligned}
  \end{align}
  Now, \eqref{eq:3.5}, \eqref{eq:aux-I} and~\eqref{eq:3.7} and $\lambda \leq \frac 12$ imply
  \begin{align*}
    \int_{Q}(\lambda |f|_{Q})^{p(x)}dx &\leq \lambda\, \bigg( \frac{1}{2\lambda} + 1\bigg) \int_{Q}|2f(y)|^{p(y)}\,dy \leq \int_{Q}|2f(y)|^{p(y)}\,dy.
  \end{align*}
  This proves the claim.
\end{proof}

We are now ready to prove that the function $t^{p'(\cdot)}$ is under certain assumptions a local~$\mathcal{A}_\infty$ weight. Note that the following lemma will later be applied to the dual exponent~$p'$. To avoid confusion, we formulate it therefore for a variable exponent~$q$.
\begin{lemma}\label{lem:Ainfinity}
  Let $q\in \mathcal{A}$ with $1<q^-\leq q^+<\infty$.  Suppose that $1\leq t\leq \frac{2[q]_{\mathcal{A}}}{\|\indicator_Q\|_{q(\cdot)}}$. Then $t^{q(\cdot)}\in A_{\infty}(Q)$, with the $A_{\infty}$ constant depending only on $q^+$ and ~$[q]_{\mathcal{A}}$.
\end{lemma}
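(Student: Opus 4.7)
The plan is to verify the $(\alpha,\beta)$-characterization of $A_{\infty}(Q)$ from Definition~\ref{def:Ainftyloc} with $\alpha = \tfrac12$, producing $\beta>0$ depending only on $q^+$ and~$[q]_{\mathcal{A}}$. Write $w(x)\coloneqq t^{q(x)}$, fix a sub-cube $Q'\subset Q$ and $E\subset Q'$ with $|E|\geq\tfrac12|Q'|$, and aim to show $w(E)\geq\beta\,w(Q')$. I split on the size of~$t$ at the threshold $T\coloneqq 8[q]_{\mathcal{A}}$.

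If $1\leq t\leq T$, then $1\leq w(x)\leq T^{q^+}$ pointwise, so $w(E)\geq|E|\geq \tfrac{1}{2T^{q^+}}w(Q')$ and the $A_{\infty}$ condition holds trivially. In the main case $t>T$, I apply Lemma~\ref{lem:avgmodular} on the cube~$Q'$ to the test function
\[
  f \coloneqq \frac{t}{4[q]_{\mathcal{A}}}\,\indicator_E.
\]
The hypothesis $|f|_{Q'}\geq 1$ is immediate from $|E|/|Q'|\geq\tfrac12$ and $t>8[q]_{\mathcal{A}}$. The hypothesis $\|f\|_{q(\cdot)}\leq\tfrac12$ is precisely what the bound $t\,\|\indicator_Q\|_{q(\cdot)}\leq 2[q]_{\mathcal{A}}$ buys us, since
\[
  \|f\|_{q(\cdot)} \leq \frac{t}{4[q]_{\mathcal{A}}}\,\|\indicator_{Q'}\|_{q(\cdot)} \leq \frac{t\,\|\indicator_Q\|_{q(\cdot)}}{4[q]_{\mathcal{A}}} \leq \frac{1}{2}.
\]
The conclusion of Lemma~\ref{lem:avgmodular} then reads
\[
  \int_{Q'}\biggl(\frac{\lambda|E|}{4[q]_{\mathcal{A}}|Q'|}\biggr)^{q(x)} t^{q(x)}\,dx \;\leq\; \int_{E}\biggl(\frac{1}{2[q]_{\mathcal{A}}}\biggr)^{q(x)} t^{q(x)}\,dx.
\]
Both bases lie in $(0,1]$ because $[q]_{\mathcal{A}}\geq\tfrac12$ and $\lambda\leq\tfrac12$. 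So I would use $q^-\geq 1$ to bound the right-hand base factor by~$1$, and $q^+<\infty$ to bound the left-hand base factor from below by $(\lambda|E|/(4[q]_{\mathcal{A}}|Q'|))^{q^+}$; pulling these out of the integrals yields $w(E)\geq (\lambda/(8[q]_{\mathcal{A}}))^{q^+}w(Q')$, which is the desired estimate with $\beta$ depending only on $q^+$ and~$[q]_{\mathcal{A}}$.

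The hard part will be matching the two hypotheses of Lemma~\ref{lem:avgmodular}, which pull in opposite directions: a large local average~$|f|_{Q'}$ versus a small global norm~$\|f\|_{q(\cdot)}$. The scaling $f=(t/(4[q]_{\mathcal{A}}))\indicator_E$ is chosen exactly so that the assumption $t\leq 2[q]_{\mathcal{A}}/\|\indicator_Q\|_{q(\cdot)}$ delivers the norm bound for free, while the split of $t$ into the two regimes handles the range where $t$ is too small to simultaneously secure the average bound. Once inside the lemma, the finiteness of~$q^+$ is what converts the modular inequality into the pointwise weight comparison needed for~$A_{\infty}$.
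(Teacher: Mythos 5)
Your proof is correct and matches the paper's argument in all essentials: the same test function $f=\frac{t}{4[q]_{\mathcal{A}}}\indicator_E$, the same threshold $t\lessgtr 8[q]_{\mathcal{A}}$, the same application of Lemma~\ref{lem:avgmodular} on $Q'$, and the same verification of the $(\alpha,\beta)$-condition with $\alpha=\tfrac12$. The only cosmetic difference is that you track the conclusion of Lemma~\ref{lem:avgmodular} through the explicit bases $\lambda|E|/(4[q]_{\mathcal{A}}|Q'|)$ and $1/(2[q]_{\mathcal{A}})$ before pulling out powers of $q^+$, whereas the paper first lower-bounds $|f|_{Q'}$ by $t/(8[q]_{\mathcal{A}})$ and then clears the constant; the resulting $\beta$ is the same.
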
	  
\begin{proof}
  Let us take $\lambda \coloneqq\min\set{ \tfrac{1}{2},\tfrac{1}{2[q]_{\mathcal{A}}}}$.  Let $Q' \subset Q$ be a cube and $E\subset Q'$ with $|E|\geq|Q'|/2$. We define $f \coloneqq \frac{t}{4[q]_{\mathcal{A}}} \indicator_E$.  Then $\norm{f}_{q(\cdot)} \leq \frac{t}{4[q]_{\mathcal{A}}} \norm{\indicator_Q}_{q(\cdot)} \leq\frac 12$.
  Moreover,
  \begin{align*}
    |f|_{Q'} \geq \frac{\abs{E}}{\abs{Q'}}\frac{t}{4[q]_{\mathcal{A}}} \geq \frac{t}{8[q]_{\mathcal{A}}}.
  \end{align*}
  Suppose that $t \geq 8[q]_{\mathcal{A}}$, then $\abs{f}_{Q'}\geq 1$ and by Lemma~\ref{lem:avgmodular} we have
  \begin{align*}
    \int\limits_{Q'}\! \bigg( \frac{\lambda\, t}{8[q]_{\mathcal{A}}}\bigg)^{q(x)} \!\!dx \leq  \int\limits_{Q'}\! (\lambda\abs{f}_{Q'})^{q(x)}\!dx\leq\! \int\limits_{Q'} \abs{2f}^{q(x)}\!dx = \int\limits_{E} \!\bigg( \frac{t}{2[q]_{\mathcal{A}}}\bigg)^{q(x)}\!dx \leq \int\limits_{E} t^{q(x)}dx,
  \end{align*}
  which implies
  \begin{align}
    \label{eq:Ainfinity-aux1}
    \int_{Q'}t^{q(x)}dx\leq \left(\frac{8[q]_{\mathcal{A}}}{\lambda}\right)^{q^{+}}\int_{E}t^{q(x)}dx \leq \big(32[q]^2_{\mathcal{A}}\big)^{q^+}\int_{E}t^{q(x)}dx.
  \end{align}
  On the other hand it $1 \leq t \leq 8[q]_{\mathcal{A}}$, then
  \begin{align}
    \label{eq:Ainfinity-aux2}
    \int_{Q'}t^{q(x)}dx\leq \big(8[q]_{\mathcal{A}}\big)^{q^+} \abs{Q'} \leq
    2\big(8[q]_{\mathcal{A}}\big)^{q^+} \int_E t^{q(x)}\,dx.
  \end{align}
  Thus, \eqref{eq:weight-alpha-beta} holds  with $\alpha=\frac 12$ and $\beta = \max \set{(32[q]^2_{\mathcal{A}})^{q^+}, 2(8[q]_{\mathcal{A}})^{q^+}}$. Hence, $t^{q(\cdot)}\in A_{\infty}(Q)$ with $\mathcal{A}_\infty$-constant only depending on~$q^+$ and $[q]_{\mathcal{A}}$.
\end{proof}

\subsection{Proof of the main theorem}
\label{sec:proof-main-theorem}

In this section we will prove our main result Theorem~\ref{thm:main}. The proof is split into two parts. First, we prove the following proposition, which is exactly Theorem~\ref{thm:main} under the additional assumption that~$p^+< \infty$. Then we will deduce Theorem~\ref{thm:main} by a suitable approximation process using the results of Section~\ref{sec:limits-funct-expon}.
\begin{proposition}
  \label{pro:main-pre}
  Let $p\in \mathcal{A}\cap\mathcal{N}$ with $1<p^-\leq p^+ < \infty$. Then there exists a constant $c>0$ depending on $(p')^+,\,[p]_{\mathcal{A}},\,[p]_{\mathcal{N}}$ and $n$ such that for any $f\in L^{p(\cdot)}(\mathbb{R}^{n})$
  \begin{align}
    \label{eq:maininequality-pre}
    \norm{Mf}_{p(\cdot)}\leq c\,\norm{f}_{p(\cdot)}.
  \end{align}
\end{proposition}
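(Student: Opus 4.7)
By homogeneity and the unit-ball property together with the trick~\eqref{eq:trick}, it suffices to prove that $\rho_{p(\cdot)}(Mf)\leq K$ whenever $\rho_{p(\cdot)}(f)\leq 1$, with $K=K((p')^+,[p]_{\mathcal{A}},[p]_{\mathcal{N}},n)$; the norm estimate then follows. I split $f=f_1+f_2$ at height~$1$, setting $f_1\coloneqq f\indicator_{\{|f|>1\}}$ and $f_2\coloneqq f-f_1$, so that $\|f_2\|_\infty\leq 1$ and $|f_1|\geq 1$ wherever $f_1\neq 0$.

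\textbf{Small part.} Since $\rho_{p(\cdot)}(f_2)\leq 1$ and $\|f_2\|_\infty\leq 1$, the intersection isomorphism in Lemma~\ref{lem:Nekvinda-minimax} yields $\|f_2\|_{p_\infty}\leq 4[p]_{\mathcal{N}}$. Because $p_\infty\geq p^->1$, with $p^--1$ quantitatively controlled through $(p')^+$, the classical Hardy-Littlewood theorem on $L^{p_\infty}(\RRn)$ gives $\|Mf_2\|_{p_\infty}\leq c$ with $c=c((p')^+,[p]_{\mathcal{N}},n)$. Combined with $\|Mf_2\|_\infty\leq 1$ and the reverse direction of Lemma~\ref{lem:Nekvinda-minimax}, this yields $\|Mf_2\|_{p(\cdot)}\leq c$.

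\textbf{Large part.} For $f_1$, perform a Calder\'on-Zygmund decomposition of $|f_1|$ at height~$1$: a pairwise disjoint family of maximal dyadic cubes $\{Q_j\}$ with $1<|f_1|_{Q_j}\leq 2^n$. Because $|f_1|\geq 1$ on its support, one actually has $f_1=0$ a.e.\ on $\bigl(\bigcup_j Q_j\bigr)^c$, and the CZ support has measure $\bigl|\bigcup_j Q_j\bigr|\leq\|f_1\|_1\leq\rho_{p(\cdot)}(f_1)\leq 1$. After reducing to a dyadic maximal function $M^d$ by finitely many standard shifts (at a universal cost), I use the pointwise bound $M^d f_1(x)\leq\max\bigl(1,M^d(f_1\indicator_{Q_j})(x)\bigr)$ for $x\in Q_j$, and $M^d f_1\leq 1$ off $\bigcup_j Q_j$. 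The heart of the argument is to apply Lemma~\ref{lem:avgmodular} cube by cube -- whose proof rests on Lemma~\ref{lem:Ainfinity} applied with the dual exponent $q=p'$, so that only $(p')^+$ (not $p^+$) enters the $\mathcal{A}_\infty$ constant -- converting averages into modular estimates of the form
\[
  \int_{Q_j}(\lambda\,|f_1|_{Q_j})^{p(x)}\,dx\leq\int_{Q_j}|2f_1|^{p(x)}\,dx, \qquad \lambda=\min\bigl\{\tfrac{1}{2},\tfrac{1}{2[p]_{\mathcal{A}}}\bigr\}.
\]
Summing over $j$ and combining with the control $M^d(f_1\indicator_{Q_j})\lesssim |f_1|_{Q_j}$ on most of $Q_j$ bounds the contribution of $\bigcup_j Q_j$ to $\rho_{p(\cdot)}(Mf_1/K)$ by a constant multiple of $\rho_{p(\cdot)}(f_1)\leq 1$. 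The residual contribution of $\{M^d f_1\leq 1\}\cap\bigl(\bigcup_j Q_j\bigr)^c$ is absorbed via the weak-$(1,1)$ bound for $M$ applied to $f_1\in L^1(\RRn)$, routed through Nekvinda's condition exactly as for $f_2$.

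\textbf{Main obstacle.} The genuine difficulty is entirely quantitative: every constant must be independent of $p^+$, because this proposition will be fed into the approximation scheme of Lemma~\ref{lem:pkbetter} in order to deduce Theorem~\ref{thm:main} with $p^+\leq\infty$. This forces Lemma~\ref{lem:Ainfinity} to be applied only to $q=p'$ (whose upper bound $(p')^+$ is admissible), forces all estimates to be carried out at the modular level and only converted to norms through~\eqref{eq:trick}, and requires care in handling the ``off-cube'' part of $M^d f_1$ through the Nekvinda mechanism rather than through a direct integration of $1^{p(x)}$, which would diverge globally.
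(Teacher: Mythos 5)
Your treatment of the small part $f_2$ via Lemma~\ref{lem:Nekvinda-minimax} and boundedness of $M$ on $L^{p_\infty}$ is sound, and the observation that $(p')^+$ controls $p^-$ and hence $p_\infty$ is correct. The large part, however, has a genuine gap. A single Calder\'on--Zygmund decomposition of $f_1$ at height~$1$ together with $M^d f_1\leq\max(1,M^d(f_1\indicator_{Q_j}))$ on $Q_j$ does not reduce the problem to the averages $|f_1|_{Q_j}$: the claim that $M^d(f_1\indicator_{Q_j})\lesssim|f_1|_{Q_j}$ holds ``on most of $Q_j$'' is a weak-$(1,1)$ statement, and it leaves the complementary part of $Q_j$ (small in measure, but with $M^d(f_1\indicator_{Q_j})$ arbitrarily large) completely unestimated. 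Lemma~\ref{lem:avgmodular} only bounds the $p(\cdot)$-modular of $\sum_j|f_1|_{Q_j}\indicator_{Q_j}$, which is a genuinely smaller object than $Mf_1$. Filling the gap requires capturing the multilevel structure of $Mf_1$, i.e.\ decomposing at all heights $\gamma^k$ and summing over levels with geometric decay across layers.

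That decay is exactly what a primal approach cannot obtain without a dependence on $p^+$. The layer-to-layer decay comes from the local $\mathcal{A}_\infty$ property of $t^{q(\cdot)}$, and in Lemma~\ref{lem:Ainfinity} the $\mathcal{A}_\infty$ constant depends on $q^+$; in the primal picture $q=p$ and this constant blows up with $p^+$. (You also have the dependency reversed: Lemma~\ref{lem:avgmodular} does not rest on Lemma~\ref{lem:Ainfinity} — it is Lemma~\ref{lem:Ainfinity} that is proved from Lemma~\ref{lem:avgmodular}, and your argument never actually invokes the $\mathcal{A}_\infty$ decay that Lemma~\ref{lem:Ainfinity} furnishes.) The paper sidesteps this by first applying the norm-conjugate formula, Lemma~\ref{lem:norm-formula}, to test $M^{\mathcal{D}}f$ against $g\in L^{p'(\cdot)}$, performing a full CZ decomposition of $f$ at levels $3^{nk}$, and \emph{transferring} the nested level-set structure onto an averaging operator $Tg=\sum_{l\geq 0}T_l g$ acting on $g$. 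Lemmas~\ref{lem:avgmodular} and~\ref{lem:Ainfinity} are then applied with $q=p'$, so all $\mathcal{A}_\infty$ constants involve only $(p')^+$, and the factor $\bigl(|Q_j^k\cap\Omega_{k+l}|/|Q_j^k|\bigr)^\varepsilon\lesssim 3^{-l\varepsilon n}$ supplies the geometric decay in $l$. This duality step is the key idea missing from your plan.
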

This proposition is a refinement of \cite[Theorem~4.52]{CruFio13} which allows for a better tracking of the constants. Different from \cite[Theorem~4.52]{CruFio13} our constants do not depend on~$p^+$. This allows us to include the case~$p^+=\infty$ using the approximation results of Section~\ref{sec:limits-funct-expon}. However, the assumption~$p^+<\infty$ helps in the proof of Proposition~\ref{pro:main-pre} to avoid technical problems.
\begin{proof}[Proof of Theorem~\ref{thm:main}]
  Suppose that $p \in \mathcal{A} \cap \mathcal{N}$ with $1 < p^- \leq p^+ \leq \infty$ and $f \in L^\px(\RRn)$ as in Theorem~\ref{thm:main}. Without loss of generality we can assume that~$f \geq 0$. For $m \in \mathbb{N}$, we define $f_m \coloneqq \indicator_{B_m(0)} \min \{ f, m \}$.
   Moreover, for $k \in \setN$, let $p_k \in \mathcal{A} \cap \mathcal{N}$  be the approximation of~$p$ as in Lemma~\ref{lem:pkbetter}. Then with $1+\frac 1k <p_k^- \leq p_k^+ < k+1$, $[p_k]_{\mathcal{A}} \leq 8[p]_{\mathcal{A}}$, $[p_k]_{\mathcal{N}} \leq 2 [p]_{\mathcal{N}}$ and $(p_k')^+ \leq \max \set{(p')^+,2}$. Thus, by Proposition~\ref{pro:main-pre} and the norm equivalence in~\eqref{eq:norm-equivalence}
  we obtain
  \begin{align*}
    \norm{Mf_m}_{\tilde{\rho}_{p_k(\cdot)}}\leq c\,\norm{f_m}_{\tilde{\rho}_{p_k(\cdot)}},
  \end{align*}
  where the constant only depends on $(p')^+,\,[p]_{\mathcal{A}},\,[p]_{\mathcal{N}}$. 

  By construction of $f_m$ we have  $f_m \in L^2(\RRn)$ and therefore $Mf_m \in L^2(\RRn)$. So by Lemma~\ref{lem:pkbetter}~\ref{itm:pkbetter6} we obtain with $k \to \infty$
  \begin{align*}
    \norm{Mf_m}_{\tilde{\rho}_{p(\cdot)}}\leq c\,\norm{f_m}_{\tilde{\rho}_{p(\cdot)}},
  \end{align*}
  Also by choice of~$f_m$ we have $f_m \nearrow f$ and therefore $M f_m \nearrow M f$. Thus, by the Fatou property in Lemma~\ref{lem:fatou} we can pass to the limit~$m \to \infty$ and obtain
  \begin{align*}
    \norm{Mf}_{\tilde{\rho}_{p(\cdot)}}\leq 2c\,\norm{f}_{\tilde{\rho}_{p(\cdot)}},
  \end{align*}
  This and the norm equivalence in~\eqref{eq:norm-equivalence} prove Theorem~\ref{thm:main}.
\end{proof}
It remains to prove Proposition~\ref{pro:main-pre}. Let us recall a few basic facts that we will use. As in~\cite{HytLacPer13,Ler17} a \emph{dyadic grid}~$\mathcal{D}$ is a collection of cubes from~$\RRn$ with the following properties
\begin{enumerate}
\item for any $Q\in \mathcal{D}$ its side-length $l_{Q}$ is of the form $2^{m},\,m\in \mathbb{Z};$
\item  $Q_1\cap Q_2\in \{Q_1,Q_2,\emptyset\}$ for any $Q_1,Q_2\in \mathcal{D};$
\item for every $m\in \mathbb{Z}$ the $Q \in \mathcal{D}$ with sidelength $2^{m}$ form a partition  of $\mathbb{R}^{n}$.
\end{enumerate}
For a dyadic grid~$\mathcal{D}$ we denote the associated dyadic maximal operator $M^{\mathcal{D}_\alpha}$ by
\begin{align*}
  M^{\mathcal{D}}f(x)=\sup_{x\in Q,Q\in\mathcal{D}}\frac{1}{|Q|}\int_{Q}|f(y)|dy.
\end{align*}
It has been shown in~\cite{HytLacPer13} that for every $\alpha \in \set{0,\frac 13, \frac 23}^n$ the family
\begin{align*}
  \mathcal{D}_\alpha\coloneqq \bigset{2^{-m}([0,1)^{n}+j+(-1)^m\alpha),\,\, m\in \mathbb{Z},\,\, j\in\mathbb{Z}^{n}}
\end{align*}
is a dyadic grid. Moreover, it immediately follows from \cite[Lemma 2.5]{HytLacPer13} that
\begin{align}
  \label{eq:2.2}
  Mf(x)\leq 6^{n}\sum_{\alpha\in \set{0,\frac 13,\frac 23}^n}M^{\mathcal{D}_{\alpha}}f(x).
\end{align}

The following lemma is a standard variation of the Calder\'on-Zygmund decomposition, see \cite[Lemma 2.4]{Ler17}.
\begin{lemma}\label{lem:dyadic}
  Suppose $\mathcal{D}$ is a dyadic grid. Let $f\in L^{p}(\mathbb{R}^{n}),\,1\leq p<\infty,$ and $\gamma>1$. For $k \in \setZ$ define
  \begin{align*}
    \Omega_{k}=\{x\in \mathbb{R}^{n}\,:\,M^{\mathcal{D}}f(x)>\gamma^{k}\}.
  \end{align*}
  Then each $\Omega_k$ can be written as a union of pairwise disjoint cubes $Q_j^{k}\in \mathcal{D}$ satisfying
  \begin{align}\label{eq:dyadic1}
    |Q_{j}^{k}\cap \Omega_{k+l}|\leq 2^{n} \gamma^{-l} |Q_{j}^{k}| \qquad \text{for all $l\in \setN$.}
  \end{align}
  Moreover, for each of those~$Q^k_j$ we have
  \begin{align}
    \label{eq:dyadic2}
    \gamma^k < \abs{f}_{Q_j^k} \leq 2^n \gamma^k.
  \end{align}
\end{lemma}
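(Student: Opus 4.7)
The plan is to obtain the $Q_j^k$ by a standard dyadic stopping-time argument: at each level $k$, I would take $\{Q_j^k\}_j$ to be the collection of maximal dyadic cubes $Q \in \mathcal{D}$ on which $|f|_Q > \gamma^k$. Because two dyadic cubes are either nested or disjoint, such a maximal family is automatically pairwise disjoint, and by the definition of $M^{\mathcal{D}}$ any point of $\Omega_k$ lies in some dyadic cube with $|f|_Q > \gamma^k$ and hence in one of the maximal ones, so $\Omega_k = \bigcup_j Q_j^k$.

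With this construction, I would verify~\eqref{eq:dyadic2} by considering the dyadic parent $\widehat{Q_j^k}$ of $Q_j^k$. Maximality gives $\abs{f}_{\widehat{Q_j^k}} \leq \gamma^k$, and since $|\widehat{Q_j^k}| = 2^n |Q_j^k|$, we get
\[
  \abs{f}_{Q_j^k} \leq 2^n\, \abs{f}_{\widehat{Q_j^k}} \leq 2^n \gamma^k,
\]
while $\abs{f}_{Q_j^k} > \gamma^k$ by construction. For~\eqref{eq:dyadic1}, I would apply the same construction at level $k+l$ to produce cubes $\{Q_i^{k+l}\}_i$ whose disjoint union is $\Omega_{k+l}$. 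By dyadic nestedness, any $Q_i^{k+l}$ meeting $Q_j^k$ is either contained in $Q_j^k$ or strictly contains it; the latter is impossible, since a cube strictly containing $Q_j^k$ with average exceeding $\gamma^{k+l} > \gamma^k$ would contradict the maximality of $Q_j^k$. Therefore
\[
  |Q_j^k \cap \Omega_{k+l}| \;=\; \sum_{Q_i^{k+l} \subset Q_j^k} |Q_i^{k+l}| \;\leq\; \gamma^{-(k+l)} \sum_{Q_i^{k+l} \subset Q_j^k} \int_{Q_i^{k+l}} |f|\,dx \;\leq\; \gamma^{-(k+l)} \int_{Q_j^k} |f|\,dx,
\]
and inserting the upper bound from~\eqref{eq:dyadic2} yields $2^n \gamma^{-l} |Q_j^k|$, as desired.

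The only delicate step, which is really a routine point rather than a genuine obstacle, is justifying that maximal cubes exist in the first place: one has to rule out an infinite ascending chain of dyadic cubes with averages above $\gamma^k$. This is exactly where the hypothesis $f \in L^p(\RRn)$ with $p<\infty$ is used; by H\"older's inequality $\abs{f}_Q \leq \abs{Q}^{-1/p} \norm{f}_p \to 0$ as $\abs{Q} \to \infty$, so above some size no dyadic cube can satisfy $\abs{f}_Q > \gamma^k$, and the stopping procedure is well-defined.
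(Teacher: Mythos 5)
Your argument is correct and is precisely the standard Calder\'on--Zygmund stopping-time construction that underlies \cite[Lemma~2.4]{Ler17}; the paper simply cites that reference without reproducing a proof. In particular your care in checking that maximal cubes exist (ruling out infinite ascending chains via the decay $\abs{f}_Q \leq \abs{Q}^{-1/p}\norm{f}_p$), and your use of the dyadic parent to get the upper bound $\abs{f}_{Q_j^k}\leq 2^n\gamma^k$, are exactly the right ingredients.
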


Let us now turn to the proof of Proposition~\ref{pro:main-pre}.

\begin{proof}[Proof of Proposition~\ref{pro:main-pre}]
  Let $p\in \mathcal{A}\cap\mathcal{N}$ with $1<p^-\leq p^+ < \infty $. Without loss of generality we can assume that~$f \geq 0$. By a simple approximation argument it suffices to prove~\eqref{eq:maininequality-pre} for bounded functions with compact support: Indeed, if $f \in L^\px(\RRn)$, then $f_m \coloneqq \indicator_{B_m(0)} \min \set{f,m}$ approximates~$f$ by bounded functions with compact support. Now, $f_m \nearrow f$, $M f_m \nearrow Mf$ and the Fatou property Lemma~\ref{lem:fatou} transfer the estimates for $f_m$ to $f$. Hence, we can assume in the following that $f$ is bounded, has compact support and $f \geq 0$.

  Due to~\eqref{eq:2.2} it suffices to prove that for each dyadic grid~$\mathcal{D}$ we have
  \begin{align}
    \label{eq:main-dyadic}
    \norm{M^{\mathcal{D}} f}_{\px} &\leq c \norm{f}_{\px}.
  \end{align}
  
  Due to the norm conjugate formula of Lemma~\ref{lem:norm-formula} it suffices to prove the existence of~$c>0$, which only depends on $(p')^+$, $[p]_{\mathcal{A}}$ and $[p]_{\mathcal{N}}$, such that
  \begin{align}
    \label{eq:3.8}
    \int_\RRn (M^{\mathcal{D}}f)(x)g(x)dx\leq c\,\norm{f}_{p(\cdot)}
  \end{align}
  for all $g \in L^\pdx(\RRn)$ with $\norm{g}_\pdx \leq \frac 12$ and $g\geq 0$, which are bounded and have compact support.

  For each $k\in \setZ$ set
  \begin{align*}
    \Omega_{k}=\{x\in\mathbb{R}^{n}\,\,:\,\,M^{\mathcal{D} }f(x)>3^{nk}\}
  \end{align*}
  and $D_{k}\coloneqq\Omega_{k}\backslash\Omega_{k+1}$. Then, $\bigcup_k D_k = \{ x : M^{\mathcal{D}} f(x) > 0 \}$ up to a set of Lebesgue measure zero. By Lemma \ref{lem:dyadic} each $\Omega_{k}$ can be written as a union of pairwise disjoint cubes $Q_{j}^{k}\in \mathcal{D}$ and $3^{nk} < f_{Q^k_j} \leq 2^n 3^{nk}$. Given $k \in \setZ$ the index~$j$ will run over the indices of those cubes.  Let us define the sets $E_{j}^{k}=Q_{j}^{k}\cap D_{k}$. Then the $E_{j}^{k}$ are pairwise disjoint and $\bigcup_{j}E_{j}^{k}=D_{k}.$

  Define
  \begin{align*}
    Tg(x)\coloneqq \sum_{k \in \setZ}\sum_{j}\left(\frac{1}{|Q_{j}^{k}|}\int_{E_{j}^{k}}gdx\right)\indicator_{Q_j^{k}}(x).
  \end{align*}
  Using the above definitions we get
  \begin{align*}
    \int_\RRn(M^{\mathcal{D}}f)(x)g(x)\,dx &= \sum_{k \in \setZ}
    \int_{D_k}(M^{\mathcal{D}}f)(x)g(x)\,dx
    \\
    &\leq 3^n \sum_{k \in \setZ}
    \int_{D_k} 3^{nk}g(x)\,dx
    \\
    &= 3^n \sum_{k \in \setZ} \sum_j
    \int_{E_j^k} 3^{nk}g(x)\,dx
    \\
    &\leq 3^{n}\sum_{ k\in \setZ}\sum_{j}f_{Q_{j}^{k}}\int_{E_{j}^{k}}g\,dx
    \\
    &=3^{n}\int_{\mathbb{R}^{n}}f\,Tg \,dx
    \\
    &\leq
    2\cdot 3^{n}\|f\|_{p(\cdot)}\|Tg\|_{p'(\cdot)}.
  \end{align*}
  Consequently, to prove \eqref{eq:3.8}, it is sufficient to show that
  \begin{align}\label{eq:avergeneral}
    \|Tg\|_{p'(\cdot)}\leq c,
  \end{align}
  where the constant in \eqref{eq:avergeneral} depends only on $(p')^+,\,[p]_{\mathcal{A}}$ and $[p]_\mathcal{N}$.
  
  Since $Q_{j}^{k} \subset \Omega_{k} = \bigcup_{l=0}^{\infty} D_{k+l}$ up to a set of Lebesgue measure zero, we have that  
  $Tg = \sum_{l=0}^{\infty} T_{l}g$,  
  where
  \begin{align}\label{eq:avgtl}
    T_{l}g(x) = \sum_{k \in \mathbb{Z}} \sum_{j} \alpha_{j,k}(g) \, \indicator_{Q_{j}^{k} \cap D_{k+l}}(x) \qquad \text{for all $l \geq 0$}.
  \end{align}
  here, $\alpha_{j,k}(g)$ is defined as $\alpha_{j,k}(g) \coloneqq \frac{1}{|Q_{j}^{k}|} \int_{E_{j}^{k}} g \, dx.$
  We split the operator \eqref{eq:avgtl} into two parts and prove the boundedness for each part separately.
  
  Let $\mathcal{I}_{1} = \{(j, k) : \alpha_{j,k}(g) > 1\}$ and $\mathcal{I}_{2} = \{(j, k) : \alpha_{j,k}(g) \leq 1\}$.  For $l \geq 0$ and $m = 1, 2$ define
  \begin{align*}
    T_{l}^{(m)}g(x) \coloneqq \sum_{(j, k) \in \mathcal{I}_{m}} \alpha_{j,k}(g) \, \indicator_{Q_{j}^{k} \cap D_{k+l}}(x).
  \end{align*}
  By H\"older's inequality, $\norm{g}_\pdx \leq \frac 12$ and the condition $p \in \mathcal{A}$, we obtain
  \begin{align}
    \label{eq:aux1}
    \alpha_{j,k}(g) = \frac{1}{\abs{Q^k_j}} \int_{E^k_j} g\,dx \leq\frac{1}{|Q_{j}^{k}|}\|\indicator_{E_{j}^{k}}\|_{p(\cdot)}
    \leq\frac{1}{|Q_{j}^{k}|}\|\indicator_{Q_{j}^{k}}\|_{p(\cdot)}\leq
    \frac{[p']_{\mathcal{A}}}{\|\indicator_{Q_{j}^{k}}\|_{p'(\cdot)}}.
  \end{align}
  We begin with the estimate of $T_l^{(1)}g$. In this case we need the indices $(j,k) \in \mathcal{I}_1$ with~$\alpha_{j,k}(g)>1$. Due to~\eqref{eq:aux1} we can apply Lemma~\ref{lem:Ainfinity} to $q(\cdot) = p'(\cdot)$ to obtain that $\alpha_{j,k}(g)^{p'(x)}\in \mathcal{A}_{\infty}(Q_{j}^{k})$, where the $\mathcal{A}_\infty$-constant only depends on $[p]_{\mathcal{A}}$ and $(p')^+$. Thus, there exists~$\epsilon>0$ and $c_0 \geq 1$ such that~\eqref{eq:weight-c-epsilon} holds for $w(x)=\alpha_{j,k}(g)^{p'(x)}$ with $\epsilon$ and $c_0$ only depending on $[p]_{\mathcal{A}}$ and $(p')^+$ (in particular independent of~$j$ and $k$).  Hence, we have
  \begin{align}
    \int_{Q_{j}^{k}\cap D_{k+l}}\alpha_{j,k}(g)^{p'(x)}dx
    &\leq c_0\left(\frac{|Q_{j}^{k}\cap D_{k+l}|}{|Q_{j}^{k}|}\right)^{\varepsilon}\int_{Q_{j}^{k}}\alpha_{j,k}(g)^{p'(x)}dx
  \end{align}
  Let $\lambda = \min \set{\frac12, \frac{1}{2[p]_{\mathcal{A}}}}$. Then with Lemma~\ref{lem:avgmodular} (applied to~$p'$) and~\eqref{eq:dyadic1} we estimate
  \begin{align}
    \label{eq:3.9}
    \int_{Q_{j}^{k}\cap D_{k+l}}\big(\lambda\alpha_{j,k}(g)\big)^{p'(x)}dx
    &\leq c_0\left(\frac{|Q_{j}^{k}\cap D_{k+l}|}{|Q_{j}^{k}|}\right)^{\varepsilon} \int_{E_{j}^{k}}(2g(x))^{p'(x)}dx
    \\
    &\leq c_0 2^{\epsilon n} 3^{-l\epsilon n}  \int_{E_{j}^{k}}(2g(x))^{p'(x)}dx.
  \end{align}
  Using this and $\norm{g}_{\pdx} \leq \frac 12$ we obtain
  \begin{align}\label{eq:biggerone}
    \begin{aligned}
      \int_{\mathbb{R}^{n}}(\lambda T^{(1)}_{l}g(x))^{p'(x)}dx
      &=\sum_{(j,k)\in \mathcal{I}_{1}}\int_{Q_{j}^{k}\cap D_{k+l}}\big(\lambda \alpha_{j,k}(g)\big)^{p'(x)}dx
      \\
      &\leq c_02^{\epsilon n} 3^{-l\epsilon n}\sum_{(j,k)\in\mathcal{I}_{1}}\int_{E_{j}^{k}}(2g(x))^{p'(x)}dx
      \\
      &\leq c_02^{\epsilon n} 3^{-l\epsilon n}\int_{\mathbb{R}^{n}}(2g(x))^{p'(x)}dx
      \\
      &\leq  c_02^{\epsilon n} 3^{-l\epsilon n}.
    \end{aligned}
  \end{align}
  From this, we obtain with~\eqref{eq:trick} 
  \begin{align}\label{eq:Aver1}
  	\|T^{(1)}_{l} g\|_{p'(\cdot)} \leq \lambda^{-1} c_0 2^{\epsilon n} \big(3^{-l\epsilon n}\big)^{\frac{1}{(p')^+}} \leq 4[p]_{\mathcal{N}} c_0 2^{\epsilon n} \big(3^{\frac{-\epsilon n}{(p')^+}}\big)^l.
  \end{align}
  
  Let us turn to the estimate of $T_l^{(2)}(g)$. We decompose~$g$ into $g=g_1+g_2$ with
  $g_{1} = g \indicator_{\{x : g(x) \leq 1\}}$ and $g_{2} = g \indicator_{\{x : g(x) > 1\}}$. Then
  \begin{align*}
    T_{l}^{(2)} g(x) = T_{l}^{(2)} g_{1}(x) + T_{l}^{(2)} g_{2}(x).
  \end{align*}
  We need to estimate the integral $\int_{\mathbb{R}^{n}}|T^{(2)}_{l}g_{1}|^{P}dx$, for  $P=p'_{\infty}$  and $P=(p')^{+}.$
  Then by Jensen's inequality and~\eqref{eq:dyadic1} we have
  \begin{align}
    \label{eq:aux3}
    \begin{aligned}
      \int_{\mathbb{R}^{n}}|T^{(2)}_{l}g_{1}(x)|^{P}dx&=\sum_{(j,k)\in \mathcal{I}_{2}}\int_{Q_{j}^{k}\cap D_{k+l}}(\alpha_{j,k}(g_{1}))^{P}dx
      \\
      &=\sum_{(j,k)\in \mathcal{I}_{2}} \abs{Q_j^k \cap D_{k+l}} \bigg( \frac{1}{\abs{Q^k_j}} \int_{E_{j,k}} g_{1}\,dx \bigg)^P
      \\
      &\leq\sum_{(j,k)\in \mathcal{I}_{2}}\frac{|Q_{j}^{k}\cap D_{k+l}|}{|Q_{j}^{k}|}\int_{E_{j,k}}|g_{1}(x)|^{P}dx
      \\
      &\leq 2^n 3^{-nl}\int_{\mathbb{R}^{n}}|g_{1}|^{P}dx.
    \end{aligned}
  \end{align}
  
  Let us first consider the case when $P=p'_\infty$. Define $s \in \mathcal{P}(\RRn)$ by $\frac{1}{s} \coloneqq \abs{\frac 1{p'} - \frac 1{p'_\infty}}$. Then $\norm{1}_{s(\cdot)} = [p]_{\mathcal{N}}$. Let $E\coloneqq \{x\in \mathbb{R}^{n}:\,\,p'(x)\geq p'_{\infty}\}\,\,\,\mbox{and}\,\,\,F\coloneqq\mathbb{R}^{n}\setminus E$. Then by the generalized H\"{o}lder's inequality we have
  \begin{align}
    \label{eq:aux2}
    \begin{alignedat}{3}
      \|g_{1}\indicator_{E}\|_{p'_{\infty}}&\leq 2\|\indicator_{E}\|_{s(\cdot)}\|g_{1}\|_{p'(\cdot)}&&\leq \|1\|_{s(\cdot)} &&= [p]_{\mathcal{N}}.
    \end{alignedat}
  \end{align}
  Using $\abs{g_1} \leq 1$, $p_\infty' \leq p'(x) < \infty$ on $F$ and Lemma~\ref{lem:embedding} we have
  \begin{align}
    \label{eq:aux4}
    \|g_{1}\indicator_{F}\|_{p'_{\infty}}\leq 2\, \max \set{\|g_{1}\indicator_F\|_{p'(\cdot)}, \norm{g_1 \indicator_F}_\infty} \leq 2\, \max \set{\|g\|_{p'(\cdot)}, 1} \leq 2.
  \end{align}
  Consequently, we have
  $$
  \|g_{1}\|_{p'_{\infty}}\leq \|g_{1}\indicator_{E}\|_{p'_{\infty}} +\|g_{1}\indicator_{F}\|_{p'_{\infty}}\leq [p]_{\mathcal{N}} +2.
  $$
  This and~\eqref{eq:aux3} for $P=p_\infty'$ implies
  \begin{align*}
    \|T^{(2)}_{l}g_{1}\|_{p'_{\infty}}\leq 2^{\frac{n}{p'_\infty}}3^{\frac{-nl}{p'_\infty}}([p]_{\mathcal{N}} +2) \leq 2^{n}([p]_{\mathcal{N}} +2)3^{\frac{-nl}{(p')^+}} .
  \end{align*}

  Let us now consider the case $P= (p')^+$. As in~\eqref{eq:aux4} we estimate with Lemma~\ref{lem:embedding}
  \begin{align}
    \label{eq:aux5}
    \|g_{1}\indicator_{F}\|_{(p')^+}\leq 2\, \max \set{\|g_{1}\indicator_F\|_{p'(\cdot)}, \norm{g_1 \indicator_F}_\infty} \leq 2\, \max \set{\|g\|_{p'(\cdot)}, 1} \leq 2.
  \end{align}
  From~\eqref{eq:aux3}
  we obtain
  \begin{align*}
    \|T^{(2)}_{l}g_{1}\|_{(p')^{+}}\leq 2 \cdot 2^{\frac{n}{(p')^+}} 3^{- \frac{nl}{(p')^+}} \leq 2^{n+1} 3^{- \frac{nl}{(p')^+}}.
  \end{align*}
  Considering above estimates we have 
  \begin{align*}
    \max{\left\lbrace \|T^{(2)}_{l}g_{1}\|_{p'_{\infty}},  \|T^{(2)}_{l}g_{1}\|_{(p')^+}\right\rbrace} \leq 2^n \big([p]_{\mathcal{N}}+2\big) \big(3^{-\frac{n}{(p')^+}}\big)^l.
  \end{align*}
  It follows by Lemma~\ref{lem:Nekvinda-minimax} applied to $p'$ that
  \begin{align}\label{eq:max}
    \begin{aligned}
      \norm{T^{(2)}_{l}g_{1}}_{p'(\cdot)} &\leq 4 [p]_{\mathcal{N}} \max{\left\lbrace \|T^{(2)}_{l}g_{1}\|_{p'_{\infty}}, \|T^{(2)}_{l}g_{1}\|_{(p')^+}\right\rbrace}
      \\
      &\leq 4 [p]_{\mathcal{N}} \cdot 2^{n} \big([p]_{\mathcal{N}}+2\big) \big(3^{-\frac{n}{(p')^+}}\big)^l.
    \end{aligned}
  \end{align}

  We also need to estimate $\|T^{(2)}_{l}g_{2}\|_{p'(\cdot)}$.
  We have
  \begin{align*}
    \int_{\mathbb{R}^{n}}|T^{(2)}_{l}g_{2}(x)|^{\max \set{p'(x),p'_\infty}}dx
    &=\sum_{(j,k)\in \mathcal{I}_{2}}\int_{Q_{j}^{k}\cap D_{k+l}}(\alpha_{j,k}(g_{2}))^{\max \set{p'(x),p'_\infty}}dx
    \\
    &\leq\sum_{(j,k)\in \mathcal{I}_{2}}\int_{Q_{j}^{k}\cap D_{k+l}}\alpha_{j,k}(g_{2})dx
    \\
    &=\sum_{(j,k)\in \mathcal{I}_{2}}\frac{|Q_{j}^{k}\cap D_{k+l}|}{|Q_{j}^{k}|}\int_{E_{j,k}}|g_{2}(x)|dx
    \\
    &\leq\sum_{(j,k)\in \mathcal{I}_{2}}\frac{|Q_{j}^{k}\cap D_{k+l}|}{|Q_{j}^{k}|}\int_{E_{j,k}}|g_{2}(x)|^{p'(x)}dx
    \\
    &\leq 2^n3^{-nl}\int_{\mathbb{R}^{n}}|g|^{p'(x)}dx
    \\
    &\leq2^n3^{-nl}
  \end{align*}
  From this and~\eqref{eq:trick}, we get the following estimate
  \begin{align}
    \|T^{(2)}_{l}(g_{2})\|_{\max \set{p'(\cdot),p'_\infty}} \leq 2^{n} 3^{-\frac{nl}{(p')^+}}.
  \end{align} 
  Using Lemma \ref{lem:Nekvinda-minimax} for the exponent $p'(\cdot)$ we obtain that 
  \begin{align}\label{eq:constant2}
    \|T^{(2)}_{l}(g_{2})\|_{p'(\cdot)}\leq 2[p]_{\mathcal{N}} 2^n \big(3^{-\frac{n}{(p')^+}}\big)^l.
  \end{align}
  Using \eqref{eq:max} and \eqref{eq:constant2} we have
  \begin{align}\label{eq:aver2}
    \|T_l^{(2)}g\|_{p'(\cdot)}\leq
    \big( 4 [p]_{\mathcal{N}} \cdot 2^{n} ([p]_{\mathcal{N}}+2) +2^n\big)
    \big(3^{\frac{n}{(p')^+}}\big)^{-l}
  \end{align}
  The estimates in~\eqref{eq:Aver1} and~\eqref{eq:aver2} and $T g = \sum_l T^{(1)}_l g + \sum_l T^{(2)} g$ allow us to estimate $\norm{T g}_{\pdx}$ by a geometric sum. We obtain \eqref{eq:avergeneral}, which concludes the proof of the theorem.  Recall that the constants depend solely on $(p')^+$, $[p]_{\mathcal{A}}$ and $[p]_{\mathcal{N}}$.
\end{proof}

\printbibliography	
\end{document}